\newcommand\mystyle{\everymath{\displaystyle}}
\newcommand{\Z}{\mathbb{Z}}
\newcommand{\N}{\mathbb{N}}
\newcommand{\R}{\mathbb{R}}
\newcommand{\M}{\mathcal{M}}
\newcommand{\G}{\mathscr{G}}
\newcommand{\diam}{\text{\textsf{\text{diam}}}}
\newcommand{\es}{\mathcal{ES}}
\newcommand{\la}{\mathtt{a}}
\newcommand{\bla}{\mathtt{\mathbf{a}}}
\newcommand{\bL}{\mathbb{L}}
\def\mybf #1{{\bf{\emph{#1}}}}
\newcommand\frightarrow{\scalebox{1}[.3]{$\rule[.45ex]{3ex}{1.5pt}%
  \kern-.2ex{\blacktriangleright}$}}
\newcommand\darrow[1][]{\mathrel{\stackon[-22pt]{\stackanchor[4pt]{\frightarrow}{\frightarrow}}{\scriptstyle#1}}}
\def\mybf #1{\textbf{\textit{#1}}}
\definecolor{forestgreen(web)}{rgb}{0.13, 0.55, 0.13}
\theoremstyle{plain}
\newtheorem{thm}{Theorem}[section]
\newtheorem{lemma}[thm]{Lemma}
\newtheorem{prop}[thm]{Proposition}
\newtheorem{defin}[thm]{Definition}
\newtheorem{remark}[thm]{Remark}
\newenvironment{manualtheorem}[1]{%
  \manualtheoreminner
}{\endmanualtheoreminner}
\title{Gibbs Properties of Equilibrium States}
\author{\small{Mirmukhsin Makhmudov$^{1}$ and Evgeny Verbitskiy$^{1,2}$}}
\address{\small{$^1$Mathematical Institute,  Leiden University \\
         $^2$Korteweg-de Vries Institute for Mathematics, University of Amsterdam}}
\email{m.makhmudov@math.leidenuniv.nl, evgeny@math.leidenuniv.nl}
\begin{document}
\begin{abstract}

We consider the problem of equivalence of Gibbs states and equilibrium states for continuous potentials on full shift spaces $E^{\Z}$.
Sinai, Bowen, Ruelle and others established equivalence under various assumptions on the potential $\phi$. At the same time, it is known that every ergodic measure is an equilibrium state for some continuous potential. 
This means that the equivalence 
can occur only under some appropriate conditions on the potential function.
In this paper, we identify the necessary and sufficient conditions for the equivalence.

\end{abstract}

\maketitle
 
\section{Introduction}

DLR Gibbs measures were introduced by Dobrushin (1968) and Lanford and Ruelle (1969) to describe the collective behaviour of a system composed of a large number of components, each governed by a local law.
Soon after, the Gibbs measures found applications in other fields of science and various areas of mathematics. 
In particular, in the early 1970's,
Sinai 
showed that natural invariant measures for hyperbolic dynamical systems are Gibbs measures.
The original definition of Gibbs measures in statistical mechanics is somewhat cumbersome in the context of dynamical systems.
For this reason, Bowen \cite{Bowen-book} provided a more suitable definition of Gibbs states from the dynamical systems perspective: a translation-invariant measure $\mu$ on $\Omega=E^{\Z}$, $E$ is finite, is called \textit{Gibbs in Bowen's sense}, or \textit{Bowen-Gibbs}, for a continuous potential $\phi: \Omega\to \R$, if for some constants $C>1$ and $P$, 
for all $n\in\N$ and every $\omega\in \Omega$ 
\begin{equation}\label{Bowen-Gibbs property}
    \frac{1}{C}
    \leq
    \frac{\mu\big(\{\tilde\omega\in \Omega: \tilde\omega_0^{n-1}=\omega_0^{n-1}\}\big)}{\exp\big(S_n\phi (\omega)-nP\big)}
        \leq C,
\end{equation}
where $S_n\phi(\omega)=\sum_{k=0}^{n-1}\phi(S^k\omega) $ and  $S:\Omega\to \Omega$ is the left shift on $\Omega$.
Subsequently, weaker versions of this notion were introduced. 
Namely, a translation-invariant measure $\mu$ on $\Omega$ is \textit{weak Bowen-Gibbs} if $\mu$ satisfies
\begin{equation}\label{eq: weak Bowen-Gibbs property}
    \frac{1}{C_n}
        \leq
        \frac{\mu(\{\tilde\omega\in \Omega: \tilde\omega_0^{n-1}=\omega_0^{n-1}\})}{e^{S_n\phi (\omega)-nP}}
        \leq C_n,
\end{equation}
for some subexponential sequence $\{C_n\}$ of positive real numbers, i.e., $\lim_{n\to\infty}\frac{1}{n}\log C_n=0$. 

Let us stress that Bowen's definition 
of Gibbs measures is actually a theorem in Statistical Mechanics. 
More specifically, if $\mu$ is a translation-invariant DLR-Gibbs measure 
(see Section \ref{Section DLR Gibbs Formalism} for the notion), then there exists a continuous function $\phi:\Omega\to\R$ and positive numbers $C_n=C_n(\phi)$ such that (\ref{eq: weak Bowen-Gibbs property}) holds.
Therefore, we prefer to use the name of Gibbs measures for measures which are Gibbs in the DLR sense, and we refer to the measures satisfying (\ref{Bowen-Gibbs property}) and (\ref{eq: weak Bowen-Gibbs property}) as Bowen-Gibbs and weak Bowen-Gibbs measures. 

The notion of weak Gibbs states in Bowen's sense is also somewhat misleading, as it suggests some form of non-Gibbsianity and competes with a notion under the same name in Statistical Mechanics. 
As we will see below, weak Bowen-Gibbs measures can be bona-fide Gibbs measures in the DLR sense.
We also note that there are examples of weak Gibbs measures in the DLR sense, which are weak Bowen-Gibbs as well \cite{MRTMV2000}.

Bowen's definition (\ref{Bowen-Gibbs property}) and its weak form (\ref{eq: weak Bowen-Gibbs property}) are extremely convenient from the Dynamical Systems point of view. 
At the same time, using such definitions, one can, in principle, say very little about the conditional probabilities of the underlying measure, which is the classical approach to Gibbs measures in Statistical Mechanics.
In fact, there exist Bowen-Gibbs measures that are not DLR-Gibbs measures \cite{BFV2019}*{Subsection 5.2}.

Another important notion is that of equilibrium states. 
A translation-invariant measure $\mu$ on $\Omega=E^{\Z}$ is called an \textit{equilibrium state} for a (continuous) potential $\phi:\Omega\to\R$ if 
\begin{equation}
    h(\mu)+\int_{\Omega} \phi d\mu =P(\phi),
\end{equation}
where $P(\phi)$ is the topological pressure of $\phi$ and $h(\mu)$ is the measure-theoretic entropy of $\mu$.
For expansive systems like those we consider in this paper, equilibrium states always exist.
It is easy to see that a weak Bowen-Gibbs state is also an equilibrium state for the same potential \cite{PS2018}.

A fundamental result highlighting the breadth of the class of equilibrium states is the following: if $\mu_1, \dots, \mu_k$ are some ergodic measures on $\Omega$,
then one can find a continuous potential $\phi\in C(\Omega)$ such that all these measures are equilibrium states for $\phi$ \cites{Ruelle-book, Israel-book, EFS1993}.
This remarkable generality suggests that equilibrium states can exhibit a wide range of behaviors, and in particular, one cannot expect them to possess any form of Gibbsianity in general.
This leads to a natural question: under what conditions on $\phi$ are the equilibrium states Gibbs, either in the DLR or the Bowen sense?
This question has a long history of research: 
\begin{itemize}
    \item[$\bullet$] For \textit{H\"older continuous} potentials, Sinai proved that equilibrium states are Gibbs in the DLR sense \cite{Sinai1972}*{Theorem 1}, and the Bowen-Gibbs property was established by Bowen \cite{Bowen-book}.
    Haydn extended Sinai's results to the non-symbolic setup \cites{H1987, H1994}.
    
    \item[$\bullet$] Ruelle \cite{R1968} (see also \cite{Keller-book}*{Theorem 5.3.1}) 
    studied the DLR Gibbsianity of the unique equilibrium states for  potential $\phi$ with \textit{summable variations}: 
    $$\sum_{n\geq 1}\mathtt{var}_{n}\phi <\infty,\quad 
    \mathtt{var}_{n}\phi:=\sup\big\{\phi(\omega)-\phi(\bar\omega): \omega_j=\bar\omega_j,\;\; 0\leq j\leq n-1\big\}.
    $$ 
    The Bowen-Gibbs property was treated by Keller \cite{Keller-book}*{Theorem 5.2.4, (c)} 
    
    \item[$\bullet$] Walters \cite{Walters1978} considered potentials satisfying even a weaker condition: 
    \begin{equation}\label{Walters condition}
        \lim\limits_{p\to\infty}\sup_{n\in\N}\mathtt{var}_{[-p, n+p]} S_{n+1}\phi=0,\;\;\text{ here }\;\;  S_{n+1}\phi=\sum_{i=0}^n\phi\circ S^i.
    \end{equation}
    Walters showed that there exists a unique equilibrium state for potentials satisfying (\ref{Walters condition}), and they have the so-called, $g-$measure property, which amounts to saying that the equilibrium state has continuous one-sided conditional probabilities.
    Combining this with the result of \cite{BFV2019}, one concludes the DLR Gibbsianity of the unique equilibrium state of a potential in the Walters class.
    In
    \cite{HR1992}, Haydn and Ruelle extended this to a more general setup than the setup of shift spaces.
    In the same paper, Haydn and Ruelle also established the Bowen-Gibbs property of the unique equilibrium state for a potential satisfying the \textit{Bowen condition}: 
    $$
    \sup_{n\in\N}\texttt{var}_{[-n,n]}S_{n+1}\phi<+\infty,
    $$ 
    which is slightly weaker than Walters' original condition.
    
    \item[$\bullet$] More recently, Pfister and Sullivan \cite{PS2020} established the weak Bowen-Gibbs property of equilibrium states for potentials $\phi$ with \textit{summable oscillations}: 
    $$\sum_{i=-\infty}^\infty\delta_i\phi<+\infty,\quad \delta_i\phi:=\sup\{\phi(\omega)-\phi(\bar\omega): \omega_j=\bar\omega_j,\;\; j\neq i\}.
    $$
    Unlike the preceding conditions, the summable oscillations condition does not imply the uniqueness of the corresponding equilibrium states.
    However, the DLR Gibbs property of  equilibrium states under summable oscillations has not been addressed. 
\end{itemize}

This paper continues the long line of research on the Gibbsianity of equilibrium states, as discussed above, and also extends the main result of \cite{BFV2019}, where a similar question has been answered in the case of $g-$measures.
In this paper, we show that under a similar assumption on the potential $\phi$, one can establish the Gibbs properties of equilibrium states and vice versa.
This assumption on the regularity of the potential $\phi: \Omega\to\R$ is the \textit{extensibility condition},
which requires that for all $a_0,b_0\in E$ the sequence
of functions 
 $$
 \rho_n^{a_0,b_0}(\omega)
 :
 =
 \sum_{i=-n}^n \bigl(\phi\circ S^{i}(\omega_{-\infty}^{-1}b_0\omega_1^{\infty})-\phi\circ S^{i}(\omega_{-\infty}^{-1}a_0\omega_1^{\infty})\bigr)
 $$
converges uniformly in $\omega\in \Omega$ as $n\to\infty$. 
The extensibility condition is not very restrictive.
For example, it does not imply the uniqueness of the equilibrium states, unlike the results by Sinai, Bowen, Ruelle and Walters.
Furthermore, the extensibility condition covers the previously treated classes, including
the class of H\"older continuous potentials, potentials with summable variations, Walter's class, as well as the class of potentials with summable oscillations. 
However, the potentials in Bowen's class do not necessarily have the extensibility property \cite{BFV2019}*{Section 5.5}. 
An important example of extensible potentials is the Dyson potential, $\phi^D(\omega):=h\omega_0+\sum_{n=1}^\infty\frac{\beta \omega_0\omega_n}{n^\alpha},\; \omega\in \{\pm1\}^\Z$, which has been extensively 
studied recently  \cites{JOP2023, EFMV2024, M2025}, where $h,\beta\in\R$ and $\alpha>1$.

The following theorem, the first of our main results in this paper, establishes the Gibbs properties of the equilibrium states of an extensible potential.

\begin{manualtheorem}{A}\label{Th A: ES for extensible potentials are DLR and Bowen-Gibbs}
    Suppose $\phi\in C(\Omega)$ has the extensibility property. 
    Then any equilibrium state $\mu\in \es(\phi)$ is
    \begin{itemize}
        \item[(1)] Gibbs in the Dobrushin-Lanford-Ruelle sense;
        \item[(2)] weak Bowen-Gibbs relative to the potential $\phi$.
    \end{itemize}
\end{manualtheorem}

The proof of Theorem \ref{Th A: ES for extensible potentials are DLR and Bowen-Gibbs} is given in Section \ref{proofs of main results}, and uses the following idea:
for a given potential $\phi\in C(\Omega)$ satisfying the extensibility condition, we construct a natural two-sided Gibbsian specification (a consistent family of regular probability kernels) $\gamma^\phi$ on $\Omega=E^\Z$.
Then we show that any translation-invariant DLR-Gibbs state $\nu$ for the specification $\gamma^\phi$ will be an equilibrium state for $\phi$.
Hence, the set of Gibbs states associated with the specification $\gamma^\phi$ is a subset of the set of equilibrium states for $\phi$.
Finally, if we take any equilibrium state $\tau\in \es(\phi)$ and a DLR-Gibbs state $\mu\in \G_S(\gamma^\phi)$, we will show that the relative entropy density rate $h(\tau|\mu)$ is zero.
This allows us to use the classical variational principle. 
\cite{KLNR2004}*{Theorem 4.1} and conclude that $\tau\in \G_S(\gamma^\phi)$ as well.

Our second main result is about the translation-invariant Gibbs measures, which in some sense is a converse of Theorem A.
\begin{manualtheorem}{B}\label{Th B: Gibbs states are equilibrium for extensible phi}
    Assume $\mu$ is a translation-invariant DLR Gibbs measure on $\Omega=E^\Z$.
    Then $\mu$ is an equilibrium state for a 
    potential
    with the extensibility property.
\end{manualtheorem}
We should note that if $\mu$ is a Gibbs measure for a translation-invariant \textit{uniformly absolutely convergent} (UAC) interaction, then the claim is rather standard \cite{LR1969}*{Theorem 3.2}.
However, as demonstrated in \cite{BGMMT2020}, not all translation-invariant Gibbs measures are compatible with a translation-invariant UAC interaction.
Thus, Theorem B generalizes the result in \cite{LR1969}*{Theorem 3.2} to a broader setting, encompassing all translation-invariant Gibbs measures, including those that are not Gibbs for any translation-invariant UAC interaction.

The proof of Theorem \ref{Th B: Gibbs states are equilibrium for extensible phi} is constructive and is also given in Section \ref{proofs of main results}.
In fact, we construct a natural one-sided potential $\phi_\gamma$ out of the Gibbsian specification $\gamma$ for $\mu$.
Then we show that $\phi_\gamma$ is extensible, and this allows us to apply Theorem \ref{Th A: ES for extensible potentials are DLR and Bowen-Gibbs} to $\phi_\gamma$.

One might observe an analogy between our results and Sullivan's theorem \cite{Sullivan1973}*{Theorem 1} in Statistical Mechanics. 
In Sullivan's theorem, the role of extensible potentials is played by the so-called $\mathscr{L}$-convergent interactions,  a notion that is also syntactically similar to the notion of extensibility.

We also note that the statements of Theorems A and B, along with their proofs presented in this paper, naturally extend to higher-dimensional lattices $\Z^d$ ordered lexicographically.
Since there is no substantial difference in the proofs, we only focus on the one-dimensional lattice $\Z$.

\begin{wrapfigure}{r}{6.9cm} 
\centering

\begin{tikzpicture}[scale=0.75]
    \fill[gray!10, fill opacity=0.1] (-1.5,-1.34) ellipse (4.4 and 3.5);   
    \draw[black!50] (-1.5,-1.34) ellipse (4.4 and 3.5);
    \node[left, black!70] at (0.8,1.3) {\textbf{\footnotesize{Equilibrium states}}};
    
    \fill[gray!20, fill opacity=0.1] (-1.5,-1.6) ellipse (3.8 and 2.55);   
    \draw[black!50] (-1.5,-1.6) ellipse (3.8 and 2.55);
    \node[right, black!78] at (-4.8,-2.8) {\textbf{\footnotesize Weak Bowen-Gibbs states}};
    
    \begin{scope}
        \clip (-1.5,-1.5) ellipse (2.5 and 0.9);
        \fill[gray!30, fill opacity=0.15] (-1.5,-0.25) ellipse (2.5 and 0.9);
    \end{scope}
    
    \begin{scope}
        \draw[black!50] (-1.5,-1.5) ellipse (2.5 and 0.9);
    \end{scope}
    
    \begin{scope}
        \draw[black!50] (-1.5,-0.25) ellipse (2.5 and 0.9);
    \end{scope}
    
    \node[below, black!85] at (-1.5,-1.15) {\textbf{\footnotesize{Bowen-Gibbs states}}};
    \node[below, black!99] at (-1.4,0.32) {\textbf{\footnotesize DLR Gibbs states}};
\end{tikzpicture}
    \label{fig:venn_gibbs states}
\end{wrapfigure}
\noindent

The diagram of the right summarizes the relationship between equilibrium states and various notions of Gibbs states.
\bigskip

\noindent
The paper is organized as follows:
\begin{itemize}[leftmargin=0.2cm]
    \item In Section \ref{Section DLR Gibbs Formalism}, we introduce the basic concepts in the DLR Gibbs formalism such as Gibbs measures, specifications and interactions.
    Here, we also recall some important results, such as the variational principle from the classical DLR Gibbs formalism.
\end{itemize}    

    \WFclear 
    
\begin{itemize}[leftmargin=0.2cm]
    \item In Section \ref{Section Bowen's property of Gibbs measures}, we discuss the motivation behind Bowen's definition of Gibbs states.
    \item In Section \ref{Potentials, Cocycles, and Specifications}, we discuss the relationship between the extensible potentials and Gibbsian specifications.
    \item Section \ref{proofs of main results} is dedicated to the proofs of the main results in this paper.
\end{itemize}

\begin{center}
\section{DLR Gibbs Formalism}\label{Section DLR Gibbs Formalism}
\end{center}

The theory of Gibbs states, which is put forward by Dobrushin, Lanford, and Ruelle, is very flexible and allows one to define Gibbs states on very general lattice spaces $E^\bL$, where $E$ is a Polish space and $\bL$ is a countable set.
In the present paper, we are primarily interested in probability measures
on $\Omega=E^{\Z}$, $E$ is finite, which are invariant under the left shift $S:\Omega\to\Omega$.

\subsection{Specifications, Interactions, and Gibbs states in Statistical Mechanics}\label{StatMech}
The standard Statistical Mechanics description of Gibbs states is rather different from the
definitions of Bowen-Gibbs and weak Bowen-Gibbs measures. 
The principal point is the explicit description of the family of \mybf{conditional expectations} 
indexed by finite subsets $\Lambda$ of $\Z$.  
More precisely, in Statistical Mechanics, one starts with a family of regular conditional expectations, which for $f:\Omega\to\R$, given by
$$
\gamma_\Lambda (f|\omega)
:=
\sum_{\xi_{\Lambda}\in E^{\Lambda}}
\gamma_{\Lambda}(\xi_\Lambda| \omega_{\Lambda^c}) 
f(\xi_\Lambda \omega_{\Lambda^c}),
$$
where 
\begin{equation}\label{eq: Boltzmann ansatz first time}
\aligned
\gamma_{\Lambda}(\xi_\Lambda |\omega_{\Lambda^c})&=
\frac 
{\exp\Bigl( -H_{\Lambda}(\xi_\Lambda \omega_{\Lambda^c})\Bigr)}
{Z_\Lambda(\omega)},\quad
{Z_\Lambda(\omega)}=\sum_{
\zeta_{\Lambda}\in E^{\Lambda}
}
\exp\Bigl( -H_{\Lambda}(\xi_\Lambda \omega_{\Lambda^c})\Bigr).
\endaligned
\end{equation}
In order to guarantee the tower property of the conditional expectations, one needs to assume 
\textit{consistency} of $\gamma_\Lambda$'s: $\gamma_\Lambda=\gamma_\Lambda\circ \gamma_V$ for $V\subset \Lambda$, where for $f:\Omega\to\R$ measurable and $\omega\in\Omega$, $(\gamma_\Lambda\circ \gamma_V)(f|\omega):=\int_{\Omega}\gamma_V(f|\eta)\gamma_\Lambda(d\eta|\omega)$. 
The latter is ensured if the functions $H_{\Lambda}:\Omega\to\R$ -- called a \textit{Hamiltonian} in $\Lambda$-- are of a rather special form:
\begin{equation} \label{Hamiltonian}   
H_{\Lambda}(\omega) = \sum_{\substack{V\Subset \Z\\ V\cap \Lambda\ne\varnothing}}\Phi_V(\omega_V),
\end{equation}
here the summation is taken over all finite subsets of $\Z$ (denoted by $\Subset \Z$) which have
non-empty intersection with $\Lambda$.
Here $\Phi=\{\Phi_V, V \Subset \Z\}$ is called an \mybf{interaction} and each function $\Phi_V: \Omega\to\R$ is \mybf{local}, meaning that the value of $\Phi_V(\omega)$ depends only on the values of $\omega$ within $V$, hence, we write $\Phi_V(\omega_V)$.
In order for these expressions to make sense, one needs to assume a suitable 
form of summability in \eqref{Hamiltonian}. 
The standard and sufficient assumption is \mybf{uniform absolute convergence} (UAC): for all $i\in \Z$,
$$
\sum_{i\in V\Subset \Z} \| \Phi_V\|_{\infty}=\sum_{i\in V\Subset \Z} \sup_{\omega\in\Omega}
| \Phi_V(\omega_V)|<\infty.
$$
If $\Phi$ is an UAC interaction, the corresponding \textit{specification} $\gamma=(\gamma_\Lambda)_{\Lambda\Subset\Z}$ defined by (\ref{eq: Boltzmann ansatz first time}), is \begin{itemize}
    \item \textit{non-null}: for all $\Lambda$, $\inf_\omega\gamma_\Lambda(\omega_\Lambda|\omega_{\Lambda^c})>0$,
    \item \textit{continuous}: for every $\Lambda$, $\omega\mapsto \gamma_\Lambda(\omega_\Lambda|\omega_{\Lambda^c})$ is continuous.
    \end{itemize}
In statistical mechanics, the second property is often referred to as \emph{quasi-locality}.
An important property of positive specifications, which will be used in the proofs, is the so-called \textit{bar moving property}: 
\begin{equation}\label{eq: bar moving prop}
   \frac{\gamma_{\Delta}(\xi_\Delta|\omega_{\Delta^c})}{\gamma_{\Delta}(\zeta_\Delta|\omega_{\Delta^c})}
   =
   \frac{\gamma_{\Lambda}(\xi_\Delta\omega_{\Lambda\setminus\Delta}|\omega_{\Lambda^c})}{\gamma_{\Lambda}(\zeta_\Delta\omega_{\Lambda\setminus\Delta}|\omega_{\Lambda^c})}, 
\end{equation}
for all $ \xi,\zeta,\omega\in\Omega$ and every $\Delta\subset\Lambda\Subset\Z$.
The bar moving property is equivalent to the consistency condition of specifications.

A non-null  continuous specification  $\gamma=\{\gamma_\Lambda\}$ is called \textit{\textbf{Gibbsian}}.

\begin{defin}\label{def of DLR Gibbs measures based on spec}
Suppose $\gamma=\{\gamma_\Lambda\}$ is a Gibbsian specification on $\Omega$. 
The measure $\mu$ is called Gibbs for  $\gamma$,
denoted by $\mu\in\mathcal G(\gamma)$, 
if for every $\Lambda\Subset\Z$,
$$
\mu(\omega_\Lambda|\omega_{\Lambda^c}) = \gamma_\Lambda(\omega_\Lambda|\omega_{\Lambda^c}), \; \text{ for } \mu-\text{a.e. } \omega\in\Omega,
$$
equivalently, if the DLR equations hold: for every $f\in C(\Omega)$ and $\Lambda\Subset\Z$, 
\begin{equation*}
    \int_\Omega \gamma_\Lambda(f|\omega)\mu(d\omega)=\int_\Omega f(\omega)\mu(d\omega).
\end{equation*}
\end{defin}

For any Gibbsian specification $\gamma$, the set of corresponding Gibbs measures
$\mathcal G(\gamma)$ is a non-empty convex set. In case, $\mathcal G(\gamma)$
consists of multiple measures, one says that $\gamma$ exhibits phase transitions.

\subsection{Translation-invariance and the Variational Principle}

The modern approach to Gibbsian formalism is to think about Gibbsian specifications $\gamma$
in an \mybf{interaction independent} fashion. 
The reason for this is that a measure
$\mu$ can be consistent with at most one Gibbsian specification, while there are infinitely many interactions $\Phi$ giving
rise to the same Gibbsian specification.
In fact, it was proven by Kozlov that for any Gibbsian specification $\gamma$ there exists a (in fact many) UAC interaction $\Phi$ such that $\gamma=\gamma^\Phi$.
However, such a representation is not always possible in a way that respects translation invariance.
It is shown in \cite{BGMMT2020} that there exists a Gibbsian specification $\gamma$ on $\{0,1\}^\Z$ that is \textit{translation-invariant} -- meaning $\gamma_{\Lambda+1}=\gamma_\Lambda\circ S$ for every $\Lambda\Subset\Z$ -- but can not be associated with any \textit{translation-invariant UAC interaction} $\Phi$, where $\Phi_\Lambda=\Phi_\Lambda\circ S$ for all $\Lambda\Subset\Z$, via (\ref{eq: Boltzmann ansatz first time}).
Nevertheless, many important results in Statistical Mechanics, including the \textit{variational principle}, can be formulated independently of interactions. 
\begin{thm}\cite{KLNR2004}\label{VP by KLNR}
    Let $\gamma$ be a translation-invariant Gibbsian specification and $\mu\in\G_S(\gamma)$.
    Then for all $\tau\in\M_{1,S}(\Omega)$, the specific relative entropy $h(\tau|\mu)$ exists and
    $$
    h(\tau|\mu)=0\;\; \Longleftrightarrow\;\; \tau\in\G_S(\gamma).
    $$
\end{thm}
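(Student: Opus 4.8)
The plan is to reduce the statement to the classical Gibbs variational principle (Lanford--Ruelle), passing to a concrete interaction while taking care that this can be carried out even though $\gamma$, although translation-invariant, need not arise from a translation-invariant interaction. Write $\Lambda_n=\{-n,\dots,n\}$ and let
\[
H_{\Lambda_n}(\tau|\mu)=\sum_{\omega_{\Lambda_n}\in E^{\Lambda_n}}\tau[\omega_{\Lambda_n}]\log\frac{\tau[\omega_{\Lambda_n}]}{\mu[\omega_{\Lambda_n}]}=-H_{\Lambda_n}(\tau)-\int_\Omega\log\mu[\omega_{\Lambda_n}]\,\tau(d\omega)
\]
be the relative entropy of the $\Lambda_n$-marginals. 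Divided by $|\Lambda_n|$, the first term converges to the Kolmogorov--Sinai entropy $h(\tau)$ by subadditivity. For the second term I use that $\mu$ is Gibbs for the non-null, quasilocal $\gamma$: by Kozlov's theorem fix a UAC interaction $\Phi$ with $\gamma=\gamma^\Phi$, so $\mu[\omega_{\Lambda_n}]=\int_\Omega\gamma^\Phi_{\Lambda_n}(\omega_{\Lambda_n}|\eta_{\Lambda_n^c})\,\mu(d\eta)$. Uniform absolute convergence makes the interaction bonds straddling the boundary of $\Lambda_n$ contribute $o(|\Lambda_n|)$ \emph{uniformly} in the configuration, and $\tfrac1{|\Lambda_n|}\log Z_{\Lambda_n}(\eta)\to P(\Phi)$ uniformly in $\eta$; hence $-\tfrac1{|\Lambda_n|}\log\mu[\omega_{\Lambda_n}]=\tfrac1{|\Lambda_n|}H_{\Lambda_n}(\omega)+P(\Phi)+o(1)$ uniformly in $\omega$. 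Although $\Phi$ itself may fail to be translation-invariant, the translation-invariance of $\gamma$ lets one replace $\tfrac1{|\Lambda_n|}H_{\Lambda_n}$, up to $o(1)$, by the average over $\Lambda_n$ of a \emph{translation-covariant} reference density $A_\gamma\in C(\Omega)$ read off from $\gamma$; shift-invariance of $\tau$ then gives $\tfrac1{|\Lambda_n|}\int H_{\Lambda_n}\,d\tau\to\tau(A_\gamma)$. Setting $\phi:=-A_\gamma$, the limit exists and $h(\tau|\mu)=P(\phi)-h(\tau)-\tau(\phi)$.

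By the variational principle for the topological pressure, $h(\tau)+\tau(\phi)\le P(\phi)$ for every $\tau\in\M_{1,S}(\Omega)$, with equality exactly when $\tau\in\es(\phi)$; together with the formula above this yields $h(\tau|\mu)\ge0$ and $h(\tau|\mu)=0\iff\tau\in\es(\phi)$. It remains to identify $\es(\phi)$ with $\G_S(\gamma)$. By the Lanford--Ruelle theorem every shift-invariant equilibrium state of $\Phi$ lies in $\G_S(\gamma^\Phi)=\G_S(\gamma)$; conversely, quasilocality of $\gamma$ implies that every $\tau\in\G_S(\gamma)$ is an equilibrium state for $\phi$. Hence $\es(\phi)=\G_S(\gamma)$, and combining with the previous paragraph we get $h(\tau|\mu)=0\iff\tau\in\G_S(\gamma)$; taking $\tau=\mu$ recovers $h(\mu|\mu)=0$, consistent with $\mu\in\G_S(\gamma)$.

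Two points carry the real weight. The first is the uniform $o(|\Lambda_n|)$ control of the boundary energies together with the extraction of the translation-covariant density $A_\gamma$: since translation-invariance of $\gamma$ does not propagate to the Kozlov interaction $\Phi$, one has to build $A_\gamma$ directly from $\gamma$ --- e.g.\ by comparing $-\log\gamma_{\Lambda_n}(\,\cdot\mid\bar\omega_{\Lambda_n^c})$ for a fixed boundary word $\bar\omega$ with its shifts --- and check the errors are $o(|\Lambda_n|)$. The second, and deeper, is the implication ``shift-invariant equilibrium state $\Rightarrow$ Gibbs'' used to identify $\es(\phi)$ with $\G_S(\gamma)$: this is the substantive half of Lanford--Ruelle, and it is where quasilocality of $\gamma$ is indispensable.
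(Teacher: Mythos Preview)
The paper does not supply its own proof of this theorem; it is quoted from \cite{KLNR2004} and used as a black box, together with the technical Lemma~\ref{lemma: pressure of specification and variation of spec}, also taken from that source. So there is no in-paper argument to compare against directly. That said, your proposal has genuine gaps precisely at the two points you yourself flag as carrying the real weight.

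First, passing to a Kozlov interaction $\Phi$ and then asserting $\tfrac{1}{|\Lambda_n|}\log Z_{\Lambda_n}(\eta)\to P(\Phi)$ uniformly is unjustified: Kozlov's $\Phi$ is in general not translation-invariant, and without translation invariance there is no reason for the normalized log-partition functions to converge at all, let alone to a single constant independent of the boundary condition. The subsequent replacement of $\tfrac{1}{|\Lambda_n|}H_{\Lambda_n}$ by the ergodic average of a continuous ``translation-covariant density $A_\gamma$'' is exactly the substance of the matter, and your sketch (``compare $-\log\gamma_{\Lambda_n}(\cdot\mid\bar\omega)$ with its shifts'') neither produces a well-defined continuous function nor establishes the needed $o(1)$ control. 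The specification-level estimates of Lemma~\ref{lemma: pressure of specification and variation of spec} and the construction of $\phi_\gamma$ in Theorem~\ref{defs of associated two-sided spec and extensible potential} are precisely the tools that do this job, and they work directly with $\gamma$, bypassing the non-translation-invariant Kozlov interaction entirely.

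Second, and more seriously, your identification $\es(\phi)=\G_S(\gamma)$ via Lanford--Ruelle is circular. The Lanford--Ruelle implication ``equilibrium $\Rightarrow$ Gibbs'' is stated and proved for translation-invariant UAC interactions; you have none, and the paper explicitly recalls (citing \cite{BGMMT2020}) that some translation-invariant Gibbsian specifications admit \emph{no} translation-invariant UAC interaction whatsoever. The whole purpose of the theorem you are proving is to establish the variational principle at the level of specifications, without recourse to a translation-invariant interaction; invoking Lanford--Ruelle for the hard half assumes exactly what the theorem is meant to supply. The argument in \cite{KLNR2004} for $h(\tau|\mu)=0\Rightarrow\tau\in\G_S(\gamma)$ proceeds directly from quasilocality and non-nullness of $\gamma$ to the DLR equations for $\tau$, with no interaction in sight.
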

In \cite{KLNR2004}, the authors proved some technical lemmas about translation-invariant specifications, which would be useful for us.
For $n\in\N$, we set $\Lambda_n:=[-n,n]\cap\Z$.
We fix a letter $\la$ in the finite alphabet $E$ and we will use the notation $\bla$, to denote the constant configuration consisting of $\la$'s.
\begin{lemma}\label{lemma: pressure of specification and variation of spec}
Suppose $\gamma$ is a translation-invariant Gibbsian specification on $\Omega$. Then
    \begin{itemize}
        \item[(i)]
        \begin{equation}\label{eq: est about decay rate of spec ratios}
        \sup_{\sigma, \omega, \eta\in\Omega} \Big\lvert
        \log 
        \Big(
        \frac{\gamma_{[0,n]}(\sigma_{[0,n]}|\omega_{[0,n]^c})}
        {\gamma_{[0,n]}({\bla}_{[0,n]}|\omega_{[0,n]^c})} 
        \cdot
        \frac{\gamma_{[0,n]}({\bla}_{[0,n]}|\eta_{[0,n]^c})}
        {\gamma_{[0,n]}(\sigma_{[0,n]}|\eta_{[0,n]^c})}
        \Big)
        \Big\rvert
        =
        o(n).
\end{equation}
        \item[(ii)] the sequence $\Big\{-\frac{1}{|\Lambda_n|} \log \gamma_{\Lambda_n}(\bla_{\Lambda_n}|\omega_{\Lambda_n^c}) \Big\}_{n\in\N}$ converges uniformly in $\omega\in \Omega$ to a constant which we denote by $P^\bla(\gamma)$.
    \end{itemize}
\end{lemma}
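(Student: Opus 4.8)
The plan is to prove both parts by reducing the block kernels $\gamma_{[a,b]}(\cdot|\cdot)$ to single‑site kernels via the bar‑moving property \eqref{eq: bar moving prop}, and then using that, by non‑nullness and quasi‑locality, the map $\omega\mapsto\log\gamma_{\{0\}}(\omega_0|\omega_{\{0\}^c})$ is uniformly continuous on the compact space $\Omega$. Concretely, set
$\delta_m:=\sup\big\{|\log\gamma_{\{0\}}(a|\eta)-\log\gamma_{\{0\}}(a|\eta')|:\ a\in E,\ \eta_j=\eta'_j\ \text{for all}\ 0<|j|\le m\big\}$;
then $\delta_m\to 0$ as $m\to\infty$, and by translation invariance the same bound controls the oscillations of every single‑site kernel $\gamma_{\{k\}}$ under changes of the background outside the window of radius $m$ around $k$. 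Everything below uses only these features of a Gibbsian specification together with translation invariance.

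For part (i), fix $n$, the block $B=[0,n]$, and $\sigma,\omega,\eta\in\Omega$. I flip the coordinates of $\bla_B$ into those of $\sigma_B$ one site at a time: let $\sigma^{(k)}$ be the configuration on $B$ equal to $\sigma$ on $\{0,\dots,k-1\}$ and to $\la$ on $\{k,\dots,n\}$, so $\frac{\gamma_B(\sigma_B|\omega_{B^c})}{\gamma_B(\bla_B|\omega_{B^c})}=\prod_{k=0}^{n}\frac{\gamma_B(\sigma^{(k+1)}|\omega_{B^c})}{\gamma_B(\sigma^{(k)}|\omega_{B^c})}$. Since $\sigma^{(k+1)}$ and $\sigma^{(k)}$ differ only at the site $k$, the bar‑moving property rewrites the $k$‑th factor as $\gamma_{\{k\}}(\sigma_k|\theta^{(k)}_\omega)/\gamma_{\{k\}}(\la|\theta^{(k)}_\omega)$, where the background $\theta^{(k)}_\omega\in E^{\{k\}^c}$ agrees with $\sigma$ on $\{0,\dots,k-1\}$, with $\la$ on $\{k+1,\dots,n\}$, and with $\omega$ on $B^c$. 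Performing the same factorisation for $\eta$ and taking logarithms, the quantity inside \eqref{eq: est about decay rate of spec ratios} equals $\sum_{k=0}^{n}T_k$ with $T_k=\big(\log\gamma_{\{k\}}(\sigma_k|\theta^{(k)}_\omega)-\log\gamma_{\{k\}}(\la|\theta^{(k)}_\omega)\big)-\big(\log\gamma_{\{k\}}(\sigma_k|\theta^{(k)}_\eta)-\log\gamma_{\{k\}}(\la|\theta^{(k)}_\eta)\big)$. Now $\theta^{(k)}_\omega$ and $\theta^{(k)}_\eta$ coincide on all of $B\setminus\{k\}$ and differ only on $B^c$, so (translating $k$ to $0$) they agree on a window of radius $r_k:=\min(k,n-k)$ around $k$, whence $|T_k|\le 2\delta_{r_k}$. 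Since the index $k$ realises each value of $r_k$ at most twice, $\big|\sum_{k=0}^{n}T_k\big|\le 2\sum_{k=0}^{n}\delta_{\min(k,n-k)}\le 4\sum_{m=0}^{\lceil n/2\rceil}\delta_m=o(n)$ by Cesàro, and this bound does not depend on $\sigma,\omega,\eta$. That is exactly \eqref{eq: est about decay rate of spec ratios}.

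For part (ii), I first extract from (i), applied to the block $[1,\ell]$ (a translate of $[0,\ell-1]$), that $\gamma_{[1,\ell]}(\sigma_{[1,\ell]}|\eta_{[1,\ell]^c})=\gamma_{[1,\ell]}(\sigma_{[1,\ell]}|\omega_{[1,\ell]^c})\cdot\frac{\gamma_{[1,\ell]}(\bla_{[1,\ell]}|\eta_{[1,\ell]^c})}{\gamma_{[1,\ell]}(\bla_{[1,\ell]}|\omega_{[1,\ell]^c})}\cdot e^{r_\ell}$ with $|r_\ell|=o(\ell)$ uniformly in $\sigma,\omega,\eta$; summing over $\sigma\in E^{[1,\ell]}$ (both kernels sum to $1$) forces $\frac{\gamma_{[1,\ell]}(\bla_{[1,\ell]}|\eta_{[1,\ell]^c})}{\gamma_{[1,\ell]}(\bla_{[1,\ell]}|\omega_{[1,\ell]^c})}=e^{o(\ell)}$, i.e. $\mathrm{osc}_\ell:=\sup_{\omega,\eta}\big|\log\gamma_{[1,\ell]}(\bla_{[1,\ell]}|\omega_{[1,\ell]^c})-\log\gamma_{[1,\ell]}(\bla_{[1,\ell]}|\eta_{[1,\ell]^c})\big|=o(\ell)$. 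Next set $p_\ell:=\sup_\omega\big(-\log\gamma_{[1,\ell]}(\bla_{[1,\ell]}|\omega_{[1,\ell]^c})\big)$ and $P_\ell:=\inf_\omega\big(-\log\gamma_{[1,\ell]}(\bla_{[1,\ell]}|\omega_{[1,\ell]^c})\big)$; these are finite, since non‑nullness and compactness give $c_0^{\ell}\le\gamma_{[1,\ell]}(\bla_{[1,\ell]}|\cdot)\le 1$ with $c_0:=\inf_{a,\eta}\gamma_{\{0\}}(a|\eta)>0$. Splitting $[1,m+n]=[1,m]\cup[m+1,m+n]$ and applying the chain rule to the probability measure $\gamma_{[1,m+n]}(\cdot|\omega_{[1,m+n]^c})$ on $E^{[1,m+n]}$, together with consistency and translation invariance, yields $e^{-p_m-p_n}\le\gamma_{[1,m+n]}(\bla_{[1,m+n]}|\omega_{[1,m+n]^c})\le e^{-P_m-P_n}$ for every $\omega$; hence $p_{m+n}\le p_m+p_n$ and $P_{m+n}\ge P_m+P_n$. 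By Fekete's lemma $p_\ell/\ell\to P^{\ast}:=\inf_\ell p_\ell/\ell$ and $P_\ell/\ell\to P_{\ast}:=\sup_\ell P_\ell/\ell$, both finite; and since $p_\ell-P_\ell=\mathrm{osc}_\ell=o(\ell)$ we get $P^{\ast}=P_{\ast}=:P$. As $P_\ell/\ell\le-\tfrac1\ell\log\gamma_{[1,\ell]}(\bla_{[1,\ell]}|\omega_{[1,\ell]^c})\le p_\ell/\ell$ for every $\omega$, the middle quantity converges to $P$ uniformly in $\omega$; specialising $\ell=2n+1=|\Lambda_n|$ and using translation invariance (so that $\gamma_{\Lambda_n}(\bla_{\Lambda_n}|\cdot)$ ranges over the same values as $\gamma_{[1,2n+1]}(\bla_{[1,2n+1]}|\cdot)$) gives (ii), with $P^{\bla}(\gamma):=P$.

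The main obstacle is the convergence, not merely boundedness, of $-\tfrac1{|\Lambda_n|}\log\gamma_{\Lambda_n}(\bla_{\Lambda_n}|\omega_{\Lambda_n^c})$: a single block splitting only produces almost‑subadditivity with an $o(\ell)$ error term that need not be summable across scales, which on its own does not guarantee a limit. The device that avoids this is to sandwich the quantity between the genuinely subadditive sequence $p_\ell$ and the genuinely superadditive sequence $P_\ell$, and to force their two Fekete limits to coincide via the $o(\ell)$ oscillation bound coming from part (i). A secondary point that needs care in part (i) is that the flipped site $k$ may sit near the boundary of $[0,n]$, where the controlled window $r_k=\min(k,n-k)$ is small; this is harmless because for each small radius there are only $O(1)$ such sites, so their total contribution remains $O\!\big(\sum_m\delta_m\big)=o(n)$.
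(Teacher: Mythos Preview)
Your argument is correct. The paper itself does not prove this lemma: it simply records the statement and attributes it to K\"ulske--Le~Ny--Redig \cite{KLNR2004}, so there is no in-paper proof to compare against. Your write-up supplies exactly the kind of self-contained argument the paper omits.

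A couple of minor remarks. In part~(ii) the phrase ``chain rule together with consistency'' hides one step worth spelling out: after writing
\[
\gamma_{[1,m+n]}(\bla_{[1,m+n]}\,|\,\omega)
=\gamma_{[m+1,m+n]}(\bla_{[m+1,m+n]}\,|\,\bla_{[1,m]}\omega_{[1,m+n]^c})
\cdot M,
\]
you bound $M$ (the marginal on $[1,m]$) by noting that consistency $\gamma_{[1,m+n]}=\gamma_{[1,m+n]}\circ\gamma_{[1,m]}$ makes $M$ a convex combination of values $\gamma_{[1,m]}(\bla_{[1,m]}\,|\,\cdot)$, hence $M\in[e^{-p_m},e^{-P_m}]$. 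That is the content of your claim $p_{m+n}\le p_m+p_n$ and $P_{m+n}\ge P_m+P_n$, and it is correct, but a reader might appreciate seeing it. The sandwich between a genuinely subadditive upper envelope and a genuinely superadditive lower envelope is an elegant way to upgrade the $o(\ell)$ almost-additivity to an honest Fekete limit; this is also how the cited reference proceeds, so your route is essentially the standard one.
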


\section{Bowen's property of Gibbs measures}\label{Section Bowen's property of Gibbs measures}

By comparing the definitions of Gibbs measures in Statistical Mechanics and that of Bowen, it is immediately clear why Bowen's definition is so attractive
and popular for dynamicists: it captures the most important, from the Dynamical Systems point of view, properties of Gibbsian states -- uniform estimates on measures of cylindric sets in terms of ergodic averages of the potential function.
In fact, whether the
measure has the Bowen property or the weak Bowen property, rarely makes any difference in Dynamical Systems: the subexpontential bound is as good as the uniform bound in practically any computations.

Nevertheless, Bowen was fully aware that his definition of Gibbs states is not the same as in Statistical Mechanics: {\it "In statistical mechanics, Gibbs states are not defined by the above
theorem. We have ignored many subtleties that come up in more complicated systems"}, \cite{Bowen-book}*{page 6}.
To introduce the definition, Bowen was motivated by an example (\cite{Bowen-book}*{page 5}) of a translation-invariant pair interaction $\Phi$, $\Phi_{V}\not\equiv 0$ only if $V=\{k\}$ or $V=\{k,n\}$, $k,n\in\Z$, satisfying a strong summability condition
\begin{equation}
    \|\Phi_{\{0\}}\|_\infty+\frac 12\sum_{n\in\Z\setminus\{0\}} |n|\cdot \|\Phi_{\{0,n\}}\|_\infty<\infty.
\end{equation}
The above  condition is a special case of a well-known uniqueness condition in thermodynamic formalism \cites{Ruelle-book, Georgii-book}:
\begin{equation}\label{diam-UAC condition}
    \sum_{0\in V\Subset \Z}\frac{\diam(V)}{|V|}\cdot\|\Phi_V\|_{\infty}<\infty.
\end{equation}
Let us now discuss the Bowen-Gibbs and the weak Bowen-Gibbs properties of DLR Gibbs states.
\begin{thm}\label{thm:BowenProperty} Suppose $\Phi=\{\Phi_V\}_{V\Subset \Z}$
is a translation-invariant UAC interaction and  let
$\phi=-\sum_{0\in V\Subset\Z_+}\Phi_V$.
Then there exists a sequence $\{C_n\}$ with $n^{-1}\log C_n\to 0$,  such that for every translation-invariant Gibbs measure $\mu$ for $\Phi$, for all $n$ and $\omega\in \Omega$, one has
$$
\frac 1{C_n}\le \frac{ \mu\bigl(\{\tilde\omega\in \Omega:\ \tilde\omega_{0}^{n-1}=\omega_{0}^{n-1}\}\bigr)}
{
\exp\bigl( S_n\phi(\omega) -nP(\phi)\bigr)
}
\le C_n.
$$
If, furthermore, the interaction $\Phi$ satisfies a stronger summability condition (\ref{diam-UAC condition}),
then there exists a unique Gibbs measure $\mu$ for $\Phi$, and
for some $C>1$, every $n\ge 1$ and all $\omega\in\Omega$,
\begin{equation}\label{Bowen-Gibbs prop under strong summability of interac}
\frac 1{C}\le \frac{ \mu(\{\tilde\omega\in \Omega:\ \tilde\omega_{0}^{n-1}=\omega_{0}^{n-1}\})}
{
\exp\bigl( S_n\phi(\omega) -nP(\phi)\bigr)
}
\le C.    
\end{equation}

\end{thm}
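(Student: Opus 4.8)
The plan is to feed the DLR equations with the window $\Lambda=[0,n-1]$ and compare the Hamiltonian $H_{[0,n-1]}$ with the Birkhoff sum $S_n\phi$. First note that UAC makes $\phi=-\sum_{0\in V\Subset\Z_+}\Phi_V$ well defined and continuous (with $\mathtt{var}_m\phi\le 2\sum_{\min W=0,\ \max W\ge m}\|\Phi_W\|_\infty\to 0$). Unwinding the definition of $\phi$ and using translation invariance of $\Phi$ gives $S_n\phi(\omega)=-\sum_{W\Subset\Z:\ 0\le \min W\le n-1}\Phi_W(\omega)$, hence
\[
-H_{[0,n-1]}(\omega)=S_n\phi(\omega)-\!\!\sum_{\substack{W\Subset\Z:\ \min W<0\\ W\cap[0,n-1]\ne\varnothing}}\!\!\Phi_W(\omega).
\]
Denote by $b_n$ the sum of $\|\Phi_W\|_\infty$ over the sets $W$ in the last sum. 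Grouping those $W$ by a point they contain in $[0,n-1]$ and translating, one gets $b_n\le\sum_{i=0}^{n-1}r_i$ with $r_i:=\sum_{0\in W,\ \min W<-i}\|\Phi_W\|_\infty\to 0$ by UAC, so $b_n=o(n)$; likewise $\sum_{m=1}^{n}\mathtt{var}_m\phi=o(n)$ by Ces\`aro.

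Next, for fixed $\omega$ apply the DLR equation of Definition \ref{def of DLR Gibbs measures based on spec} with $\Lambda=[0,n-1]$, so that $\mu(\{\tilde\omega:\tilde\omega_0^{n-1}=\omega_0^{n-1}\})=\int_\Omega \gamma_{[0,n-1]}(\omega_{[0,n-1]}\mid\eta_{[0,n-1]^c})\,\mu(d\eta)$ and $\gamma_{[0,n-1]}=e^{-H_{[0,n-1]}}/Z_{[0,n-1]}$ as in \eqref{eq: Boltzmann ansatz first time}. By the previous paragraph, for every $\eta$ the numerator $e^{-H_{[0,n-1]}(\omega_{[0,n-1]}\eta_{[0,n-1]^c})}$ equals $e^{S_n\phi(\omega)}$ up to a factor $e^{\pm\varepsilon_n}$ with $\varepsilon_n:=b_n+\sum_{m=1}^{n}\mathtt{var}_m\phi$ (the variation part accounts for the dependence of $S_n\phi$ on coordinates $\ge n$); summing over the $|E|^n$ words on $[0,n-1]$ shows that $Z_{[0,n-1]}(\eta)$ equals $Z_n(\phi):=\sum_{w\in E^{n}}\exp\big(\sup_{x\in[w]}S_n\phi(x)\big)$ up to the same multiplicative error, uniformly in $\eta$. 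Since these errors depend only on $\Phi$ and not on $\mu$, inserting both into the DLR equation and integrating over $\eta$ yields
\[
e^{-2\varepsilon_n}\,\frac{e^{S_n\phi(\omega)}}{Z_n(\phi)}\le \mu\big(\{\tilde\omega:\tilde\omega_0^{n-1}=\omega_0^{n-1}\}\big)\le e^{2\varepsilon_n}\,\frac{e^{S_n\phi(\omega)}}{Z_n(\phi)}.
\]
On the full shift the topological pressure satisfies $P(\phi)=\lim_n\frac1n\log Z_n(\phi)$ (the standard partition-function formula, see e.g.\ \cite{Keller-book}), so $Z_n(\phi)=e^{nP(\phi)+o(n)}$; combined with $\varepsilon_n=o(n)$ this gives the first assertion, with the subexponential sequence $C_n:=e^{2\varepsilon_n}\,e^{nP(\phi)}/Z_n(\phi)$, which is the same for every $\mu\in\mathcal G_S(\Phi)$.

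For the stronger conclusion assume \eqref{diam-UAC condition}. The point is the translation-invariant rearrangement $\sum_{0\in V}\tfrac{\diam V}{|V|}\|\Phi_V\|_\infty=\sum_{\min V=0}\diam(V)\,\|\Phi_V\|_\infty$ — the weight $1/|V|$ exactly cancels the multiplicity in passing between the normalizations ``$0\in V$'' and ``$\min V=0$''. Using the bounds on $b_n$ and $\mathtt{var}_m\phi$ from the first paragraph, this shows at once that $\sup_n b_n<\infty$ and $\sum_{m\ge1}\mathtt{var}_m\phi<\infty$, i.e.\ $\varepsilon_n=O(1)$ and $\phi$ has summable variations. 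Summable variations also upgrades the pressure estimate: writing $w=w'w''$ and choosing compatible extensions gives $S_{n+m}\phi(\widehat{w'w''})=S_n\phi(\widehat{w'})+S_m\phi(\widehat{w''})+O(1)$ uniformly, hence $e^{-c}Z_n(\phi)Z_m(\phi)\le Z_{n+m}(\phi)\le e^{c}Z_n(\phi)Z_m(\phi)$ for a constant $c$; applying a Fekete argument to $\log Z_n(\phi)\pm c$ (both sequences being sub- resp.\ super-additive, with the same limiting slope $P(\phi)$) forces $\log Z_n(\phi)=nP(\phi)+O(1)$. Together with $\varepsilon_n=O(1)$ this gives a constant $C$ as in \eqref{Bowen-Gibbs prop under strong summability of interac}; uniqueness of the Gibbs measure under \eqref{diam-UAC condition} is the classical uniqueness criterion recalled after the statement \cites{Ruelle-book,Georgii-book}.

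The routine parts are the three error estimates; the only genuinely load-bearing observations are the identity $\sum_{0\in V}\tfrac{\diam V}{|V|}\|\Phi_V\|_\infty=\sum_{\min V=0}\diam(V)\,\|\Phi_V\|_\infty$, which is exactly what turns each $o(n)$ bound into an $O(1)$ bound under \eqref{diam-UAC condition}, and the near-multiplicativity $Z_{n+m}(\phi)\asymp Z_n(\phi)Z_m(\phi)$, which turns $\tfrac1n\log Z_n(\phi)\to P(\phi)$ into $\log Z_n(\phi)=nP(\phi)+O(1)$. I expect the main obstacle to be organizing this bookkeeping cleanly — in particular making sure the $\eta$- and extension-dependence is absorbed uniformly into $\varepsilon_n$ so that the constant/subexponential factor is genuinely independent of the Gibbs measure $\mu$.
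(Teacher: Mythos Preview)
Your argument is correct and, for the first (weak Bowen--Gibbs) part, follows the same line as the paper: apply the DLR equation on $[0,n-1]$, compare $-H_{[0,n-1]}$ with $S_n\phi$ via a boundary sum that is $o(n)$ under UAC, and control the partition function. The paper simply cites Georgii for the uniform convergence $\tfrac1{|\Lambda|}\log Z_\Lambda(\eta)\to P(\phi)$ and for the estimate $\big|S_n\phi+H_{\Lambda_n}\big|\le \sum_{i\in\Lambda_n}\sum_{V\ni i,\,V\not\subset\Lambda_n}\|\Phi_V\|_\infty$, whereas you reprove both by hand (splitting the error into your $b_n$ and the variation tail, and comparing $Z_{[0,n-1]}(\eta)$ to the configuration--independent $Z_n(\phi)$). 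The substance is the same.

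For the second (Bowen--Gibbs) part your route genuinely diverges from the paper's. The paper observes that under \eqref{diam-UAC condition} the boundary sum stays bounded, deduces that $\phi$ satisfies \emph{Bowen's condition}, and then invokes two black boxes: Walters (2001), Theorem~4.6 (equilibrium states for Bowen potentials are Bowen--Gibbs) together with Georgii, Theorem~15.39 (translation--invariant Gibbs measures for $\Phi$ are equilibrium states for $\phi$). You instead extract the stronger intermediate fact that $\phi$ has \emph{summable variations} (via the same identity $\sum_{0\in V}\tfrac{\diam V}{|V|}\|\Phi_V\|_\infty=\sum_{\min V=0}\diam(V)\|\Phi_V\|_\infty$ the paper uses), and then finish by a direct, self--contained Fekete argument showing $\log Z_n(\phi)=nP(\phi)+O(1)$ from the near--multiplicativity $Z_{n+m}(\phi)\asymp Z_n(\phi)Z_m(\phi)$. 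Your approach buys independence from the Walters result and from the ``Gibbs $=$ equilibrium'' identification; the paper's approach is shorter once those references are accepted. Both defer the uniqueness claim to the classical criterion in \cites{Ruelle-book,Georgii-book}.
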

Let us sketch the proof of this theorem using known results in Statistical Mechanics.
The first claim is standard \cite{Georgii-book}*{Theorem 15.23}. Applying the DLR equations to the
indicator function of the cylinder set $[\sigma_0^{n-1}]$, one concludes that
$$
\mu([\sigma_0^{n-1}])=\int \frac { \exp\big(-H_{\Lambda_n}(\sigma_{\Lambda_n}\eta_{\Lambda_n^c})\big)}{Z_{\Lambda_n}(\eta)} \mu(d\eta).
$$
The sequence of functions $\frac{1}{|\Lambda_n|}\log Z_{\Lambda_n}(\eta)$ converges to the pressure $P(\Phi)=P(\phi)$ uniformly in $\eta$ as $n\to\infty$ \cite{Georgii-book}*{Theorem 15.30, part (a)}. 
Thus $\log Z_{\Lambda_n}(\eta)=\exp( |\Lambda_n|P +o(n))$.

To study the numerator, we use the estimate  (15.25) in \cite{Georgii-book}:
\begin{equation}\label{uni est for diff of ergod sum and Hamilt}
    \sup_{\sigma, \eta\in\Omega}
    \Big\lvert
    \sum_{i\in\Lambda_n}\phi \circ S^i(\sigma)
    +
    H_{\Lambda_n}(\sigma_{\Lambda_n}\eta_{\Lambda_n^c})
    \Big\rvert
    \leq \sum_{i\in\Lambda_n}\sum_{\substack{V\ni i\\ V\not\subset\Lambda_n}}
    \rVert\Phi_V\lVert_\infty,
\end{equation}
and the fact that the uniformly absolute convergence of the interaction $\Phi$ ensures that 
\begin{equation*}
    \sum_{i\in\Lambda_n}\sum_{\substack{V\ni i\\ V\not\subset\Lambda_n}}
    \rVert\Phi_V\lVert_\infty\;
    =
    o(|\Lambda_n|).
\end{equation*}
The uniqueness of the Gibbs measures under (\ref{diam-UAC condition}) follows from Theorem 8.39 in \cite{Georgii-book} (see also Comment 8.41 and equation (8.42) in \cite{Georgii-book}).
Note that (\ref{diam-UAC condition}) implies that $\sum_{i\in\Lambda_n}\sum_{\substack{V\ni i\\ V\not\subset\Lambda_n}}
    \rVert\Phi_V\lVert_\infty$
remains bounded as $n\to\infty$, in fact, for all $n\in\N$, one has that 
$$
\sum_{i\in\Lambda_n}\sum_{\substack{V\ni i\\ V\not\subset\Lambda_n}}
\rVert\Phi_V\lVert_\infty
\leq
\sum_{0\in V\Subset\Z_+} {\diam (V)}\cdot \| \Phi_V\|_\infty
=
\sum_{0\in V\Subset\Z}\frac{\diam(V)}{|V|}\cdot \|\Phi_V\|_\infty
=:D.
$$
This together with (\ref{uni est for diff of ergod sum and Hamilt}) yields that 
\begin{equation}
  \sup_{\sigma, \eta\in\Omega}
    \Big\lvert
    \sum_{i\in\Lambda_n}\phi \circ S^i(\sigma)
    -
    \sum_{i\in\Lambda_n}\phi \circ S^i(\sigma_{\Lambda_n}\eta_{\Lambda_n^c})
    \Big\rvert\leq 2D,  
\end{equation}
i.e., $\phi$ satisfies Bowen's condition \cite{Walters2001}.
Then (\ref{Bowen-Gibbs prop under strong summability of interac}) follows from Theorem 4.6 in \cite{Walters2001} and from the fact that the translation-invariant Gibbs measures for $\Phi$ are equilibrium states for $\phi$ \cite{Georgii-book}*{Theorem 15.39}.

\begin{remark} Note that it is not a coincidence that 
the condition (\ref{diam-UAC condition}) implying the Bowen property, also implies uniqueness.
Indeed, suppose $\mu$ and $\nu$ are two ergodic measures on $\Omega$ with the Bowen-Gibbs property for some continuous potential $\phi$. Then the Bowen-Gibbs  property implies
that 
$$
\frac 1C \le\frac {\mu([\omega_0^n])}{\nu([\omega_0^n])}\le C
$$
for all $n$ and every $\omega\in\Omega$, and hence the measures $\mu$ and $\nu$ are equivalent,
and thus are equal. 
\end{remark}

\section{Potentials, Cocycles, and Specifications}\label{Potentials, Cocycles, and Specifications}
An alternative, more dynamical approach to DLR-Gibbs measures, also known as the Ruelle-Capocaccia approach, was introduced in \cite{Capocaccia1976}. 
However, as demonstrated in the original work \cite{Capocaccia1976}, for lattice systems—which is the setting in this paper—the Ruelle-Capocaccia definition of Gibbs measures coincides with the specification-based definition given in
this paper.
Keller’s book \cite{Keller-book}*{Chapter 5} provides an excellent summary of Gibbs measures following the Ruelle-Capocaccia approach under the assumption that the underlying potential has summable variations.
Below we explore how this approach extends to cases where the potential lacks summable variations but still has the extensibility property. 

Recall that the extensibility condition requires that for all $\omega\in\Omega$ and every $a,\tilde a\in E$, the sequence
of functions
\begin{equation}\label{cocycle for extensible potential_finite n 0 site case}
    \rho_n^{a,\tilde a }(\omega)
=\sum_{i=-n}^n \big[\phi(S^i\omega^a)-\phi(S^i\omega^{\tilde a})\big], \quad n\ge 0,
\end{equation}
converges uniformly as $n\to\infty$. 
Here, we use the notation $\omega^a=(\omega^a_k)_{k\in\Z}$ is given by
$$
\omega^a_k=\begin{cases} a,&\ k=0,\\
\omega_k,&\ k\ne 0.	
\end{cases}
$$
Therefore, we can define a continuous function $\rho_n^{a,\tilde a }(\omega):\Omega\to\R$,
$$
\rho(\omega^a, \omega^{\tilde a})=\lim_{n\to\infty} \rho_n^{a,\tilde a }(\omega).
$$

\begin{prop}
    Suppose $\phi$ satisfies the extensibility condition, then for any pair $\xi,\eta\in\Omega$,
such that the set $\{k\in\Z: \xi_k\ne\eta_k\}$ is finite, the sequence of continuous functions
\begin{equation}\label{cocycle for extensible potential_finite n}
    \rho_n(\xi,\eta)= \sum_{i=-n}^n \big[\phi(S^i\xi)-\phi(S^i\eta)\big], \quad n\ge 0,
\end{equation}
converges.
Furthermore,
\begin{itemize}
    \item[(1)] the limiting function $\rho(\xi,\eta)=
    \lim_n\rho_n(\xi,\eta)$ is a \textbf{cocycle}, i.e.,
    for every $\xi, \eta, \zeta\in \Omega$ with $\xi_i=\eta_i=\zeta_i$ for all $i$ with $|i|\gg1$, one has
    \begin{equation}\label{cocycle equation}
    \rho(\xi,\zeta)=\rho(\xi,\eta)+\rho(\eta, \zeta);
    \end{equation}
    
    \item[(2)] $\rho$ is translation-invariant in the sense that for every pair $(\xi,\eta)$ with $\{k\in\Z: \xi_k\ne\eta_k\}$ finite, 
    \begin{equation}\label{tr-inv of the cocycle}
    \rho(\xi, \eta)=\rho(S\xi, S\eta).
    \end{equation}

    \item[(3)] for every $\Lambda\Subset\Z$ and $\eta_\Lambda, \zeta_\Lambda\in E^\Lambda$, $\rho(\eta_\Lambda \xi_{\Z\setminus\Lambda}, \zeta_\Lambda \xi_{\Z\setminus\Lambda})$ is a continuous function of $\xi$.
\end{itemize}
\end{prop}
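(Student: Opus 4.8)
The plan is to reduce the whole proposition to the single–discrepancy–site statement that the extensibility hypothesis already supplies, and then to obtain the cocycle identity, translation invariance, and continuity by elementary reindexing. First I would establish \emph{convergence}. Given $\xi,\eta$ with $D:=\{k:\xi_k\ne\eta_k\}$ finite, write $D=\{j_1,\dots,j_m\}$ and interpolate: let $\zeta^{(0)}=\xi$ and let $\zeta^{(\ell)}$ agree with $\eta$ on $\{j_1,\dots,j_\ell\}$ and with $\xi$ elsewhere, so $\zeta^{(m)}=\eta$ and consecutive configurations $\zeta^{(\ell-1)},\zeta^{(\ell)}$ differ at exactly the one site $j_\ell$. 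For each fixed $n$ one has the exact telescoping identity $\rho_n(\xi,\eta)=\sum_{\ell=1}^{m}\rho_n(\zeta^{(\ell-1)},\zeta^{(\ell)})$, so it suffices to prove that $\rho_n(\alpha,\beta)$ converges when $\alpha,\beta$ differ at a single site $j$.

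For that single–site case I would apply $S^j$ and reindex: writing $S^j\alpha=\omega^a$ and $S^j\beta=\omega^{\tilde a}$ with $a=\alpha_j$, $\tilde a=\beta_j$, the substitution $\ell=i-j$ gives $\rho_n(\alpha,\beta)=\sum_{\ell=-n-j}^{\,n-j}\bigl(\phi(S^\ell\omega^a)-\phi(S^\ell\omega^{\tilde a})\bigr)$. This differs from $\rho_n^{a,\tilde a}(\omega)$ by at most $2|j|$ terms of the form $\phi(S^\ell\omega^a)-\phi(S^\ell\omega^{\tilde a})$ with $|\ell|\ge n-|j|$; since $S^\ell\omega^a$ and $S^\ell\omega^{\tilde a}$ differ only at coordinate $-\ell$, for large $n$ these two configurations agree on a symmetric window about the origin of radius tending to $\infty$, so by uniform continuity of $\phi$ on the compact space $\Omega$ each such term is $o(1)$, uniformly in $\omega$. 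Hence $\rho_n(\alpha,\beta)=\rho_n^{a,\tilde a}(\omega)+o(1)$ and convergence follows from extensibility; moreover the convergence is uniform, which, fed back through the telescoping identity, shows that $\rho_n(\xi,\eta)\to\rho(\xi,\eta)$ uniformly over all pairs $(\xi,\eta)$ whose discrepancy set lies in a fixed finite set $\Lambda$. I would record this uniformity for part (3).

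The three remaining assertions are then short. For (1): if $\xi,\eta,\zeta$ agree outside a common finite window, then $\rho_n(\xi,\zeta)=\rho_n(\xi,\eta)+\rho_n(\eta,\zeta)$ holds identically in $n$, and letting $n\to\infty$ (all three limits now exist) gives (\ref{cocycle equation}). For (2): reindexing, $\rho_n(S\xi,S\eta)=\sum_{\ell=-n+1}^{\,n+1}\bigl(\phi(S^\ell\xi)-\phi(S^\ell\eta)\bigr)$, which differs from $\rho_n(\xi,\eta)$ only by the two boundary terms at $\ell=-n$ and $\ell=n+1$; these tend to $0$ by the same uniform–continuity argument, so $\rho(S\xi,S\eta)=\rho(\xi,\eta)$. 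For (3): each map $\xi\mapsto\rho_n(\eta_\Lambda\xi_{\Z\setminus\Lambda},\zeta_\Lambda\xi_{\Z\setminus\Lambda})$ is a finite sum of compositions of continuous maps, hence continuous, and by the uniformity from the previous paragraph these converge uniformly in $\xi$; a uniform limit of continuous functions is continuous.

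I do not expect a genuinely hard step here; the one point that will need care is the implicit ``window–adjustment'' principle used in the last two paragraphs and in the single–site reduction — that translating or enlarging the symmetric window $[-n,n]$ by a bounded amount affects neither convergence nor the limiting value. This is precisely where uniform continuity of $\phi$ is essential, via $\phi(S^\ell\omega^a)-\phi(S^\ell\omega^{\tilde a})\to0$ uniformly as $|\ell|\to\infty$, and it is also what makes the symmetric–window formulation of extensibility robust. Everything else is telescoping and reindexing.
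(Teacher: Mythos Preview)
Your proposal is correct and follows essentially the same route as the paper: reduce to single-site discrepancies by telescoping, handle the single-site case by shifting so that the discrepancy is at the origin and controlling the resulting boundary terms via (uniform) continuity of $\phi$, and then read off (1)--(3) from the finite-$n$ identities together with the uniform convergence. The only cosmetic differences are the order of presentation (the paper does the single-site step first) and that for (2) the paper points back to the single-site computation (which already shows $\rho(\xi,\eta)=\rho(S^k\xi,S^k\eta)$) rather than giving your direct boundary-term argument.
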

\begin{proof}
We carry out the proof in two steps.

\noindent
\textit{First Step:} Let $\xi,\eta\in\Omega$ such that for some $k\in\Z$, $\xi_{\Z\setminus\{k\}}=\eta_{\Z\setminus\{k\}}$.
Without loss of generality, let $k> 0$.
Then 
\begin{eqnarray}
    \sum_{i=-n}^n [\phi\circ S^{i-k}(S^{k}\xi)-\phi\circ S^{i-k}(S^{k}\eta)]
    \notag &=&
    \sum_{i=-n-k}^{n-k} [\phi\circ S^{i}(S^{k}\xi)-\phi\circ S^{i}(S^{k}\eta)]\\
    &=&\label{extensible applied part for shifted config.s}
    \sum_{i=-n}^{n}
    [\phi\circ S^{i}(S^{k}\xi)-\phi\circ S^{i}(S^{k}\eta)]\\
    &-&\label{- leftover terms}
    \sum_{i=n-k+1}^{n}
    [\phi\circ S^{i}(S^{k}\xi)-\phi\circ S^{i}(S^{k}\eta)]\\
    &+&\label{+ leftover terms}
    \sum_{i=-n-k}^{-n-1}
    [\phi\circ S^{i}(S^{k}\xi)-\phi\circ S^{i}(S^{k}\eta)].
\end{eqnarray}
Since $(S^k\xi)_{\Z\setminus\{0\}}=(S^k\eta)_{\Z\setminus\{0\}}$, the extensibility property of $\phi$ yields that the sum in (\ref{extensible applied part for shifted config.s}) converges uniformly to $\rho^\phi(S^k\xi, S^k\eta)$ as $n\to\infty$.
Note that for any $i\in\Z$, 
$$
|\phi\circ S^{i}(S^k\xi)-\phi\circ S^{i}(S^k\eta)|
\leq
\delta_{i}\phi
\leq
\mathtt{var}_{(-|i|,\; |i|)}\phi
\xrightarrow[|i|\to\infty]{}0.
$$
Thus for (\ref{- leftover terms}) and (\ref{+ leftover terms}), one has that 
$$
\Big|
\sum_{i=n-k+1}^n
[\phi\circ S^{-i}(S^k\xi)-\phi\circ S^{-i}(S^k\eta)]
\Big|
\leq
k \cdot\mathtt{var}_{(k-n,\; n-k)} \phi
$$
and 
$$
\Big|
\sum_{i=-n-k}^{-n-1}
    [\phi\circ S^{-i}(S^k\xi)-\phi\circ S^{-i}(S^k\eta)]
\Big|
\leq
k \cdot\mathtt{var}_{(k-n,\; n-k)} \phi.
$$
Thus since $k$ is fixed and $\mathtt{var}_{(k-n,\; n-k)} \phi
\xrightarrow[n\to\infty]{} 0$, both sums in (\ref{- leftover terms}) and (\ref{+ leftover terms}) converge uniformly to $0$ as $n\to\infty$.
Therefore, $\rho$ is defined at $(\xi, \eta)$ and $\rho(\xi, \eta)=\rho(S^k\xi, S^k\eta)$.

\medskip

\noindent
\textit{Second Step:}
Let $\xi, \eta\in\Omega$ such that for some $\Lambda\Subset\Z$, $\xi_{\Z\setminus\Lambda}=\eta_{\Z\setminus\Lambda}$.
Then there exists $m\in\N$ such that $\Lambda\subset[-m,m]\cap\Z$.
Then for $n>m$, one has
\begin{equation*}
    \sum_{i=-n}^n
    [\phi \circ S^{i}(\xi)-\phi \circ S^{i}(\eta)]
    =
    \sum_{j=-m}^m\sum_{i=-n}^{n}
    [\phi\circ S^{i}(\xi_{-\infty}^{-m-1}\eta_{-m}^{j-1}\xi_{j}^\infty)
    -
    \phi\circ S^{i}(\xi_{-\infty}^{-m-1}\eta_{-m}^{j}\xi_{j+1}^\infty)].
\end{equation*}
By the first step, the sum over $i$ on the RHS of the last equation above converges uniformly for each $j$ as $n\to\infty$.\\
We now address statements (1)-(3).
Claim (1) follows directly from the definition of $\rho$.
Equations (\ref{cocycle equation}) and (\ref{tr-inv of the cocycle}) easily follow from the first and second steps discussed above.
The continuity of the map $\xi \mapsto \rho(\eta_\Lambda \xi_{\Z\setminus\Lambda}, \zeta_\Lambda \xi_{\Z\setminus\Lambda})$ for all $\Lambda \Subset \Z$ and $\eta, \zeta \in \Omega$ is a consequence of the uniform convergence of (\ref{cocycle for extensible potential_finite n}).

\end{proof}

Now, we shall discuss how to associate a Gibbsian specification with an extensible potential and vice versa. 
We have the following theorem.

\begin{thm}\label{defs of associated two-sided spec and extensible potential} (i) Suppose $\phi:\Omega\to\R$ is a continuous function with extensibility property, then 
$\gamma^\phi=(\gamma
^\phi_\Lambda)_{\Lambda\Subset\Z}$ given by 
\begin{equation}\label{eq: def of two sided spec}
    \gamma_\Lambda^\phi(\omega_\Lambda|\omega_{\Lambda^c})=
    \Big(
    \sum_{\xi_\Lambda\in E^\Lambda} 
    e^
    {
    \rho^\phi(\xi_\Lambda\omega_{\Z\setminus\Lambda},\; \omega)
    }  
    \Big)^{-1},\;\; \omega\in\Omega
\end{equation}
is a translation invariant Gibbsian specification.

(ii) Suppose $\gamma=(\gamma_\Lambda)_{\Lambda\Subset\Z}$ is a  translation invariant Gibbsian specification, then
\begin{equation}
   \phi_\gamma(\omega)=\log\frac{\gamma_{\{0\}}(\omega_0| \bla_{-\infty}^{-1}\omega_{1}^\infty)}
   {\gamma_{\{0\}}(\bla_0| \la_{-\infty}^{-1}\omega_{1}^\infty)},\;\; \omega\in\Omega 
\end{equation}
is a continuous function with extensibility property such that $\gamma^{\phi_\gamma}=\gamma$.
\end{thm}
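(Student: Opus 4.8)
The plan is to establish (i) by a direct check of the specification axioms, and (ii) by computing the partial sums $\rho_n^{a,\tilde a}(\omega)$ \emph{exactly} via a telescoping argument, from which both the extensibility of $\phi_\gamma$ and the identity $\gamma^{\phi_\gamma}=\gamma$ will follow.

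For part (i), the first step is to put the kernel (\ref{eq: def of two sided spec}) in manifestly normalized form: fixing any configuration $\omega^\ast$ that agrees with $\omega$ off $\Lambda$, and noting that every $\xi_\Lambda\omega_{\Z\setminus\Lambda}$ differs from $\omega^\ast$ only on the finite set $\Lambda$, the cocycle equation (\ref{cocycle equation}) yields
\[
\gamma^\phi_\Lambda(\zeta_\Lambda|\omega_{\Z\setminus\Lambda})
=\frac{e^{\rho^\phi(\zeta_\Lambda\omega_{\Z\setminus\Lambda},\,\omega^\ast)}}{\sum_{\xi_\Lambda\in E^\Lambda}e^{\rho^\phi(\xi_\Lambda\omega_{\Z\setminus\Lambda},\,\omega^\ast)}}.
\]
This displays $\gamma^\phi_\Lambda(\cdot|\omega_{\Z\setminus\Lambda})$ as a probability measure on $E^\Lambda$. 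Non-nullity follows because $\rho^\phi$ is bounded on pairs of configurations differing only on $\Lambda$ (it is continuous in the remaining coordinates by part (3) of the preceding Proposition, $\Omega$ is compact and $E$ finite); quasi-locality of $\gamma^\phi$ follows from that same part (3). For consistency I would verify the bar-moving property (\ref{eq: bar moving prop}): with $\omega^\ast=\omega$ in the formula above, (\ref{cocycle equation}) shows that both sides of (\ref{eq: bar moving prop}) equal $e^{\rho^\phi(\xi_\Delta\omega_{\Z\setminus\Delta},\,\zeta_\Delta\omega_{\Z\setminus\Delta})}$, and for a non-null proper family of kernels the bar-moving property is equivalent to consistency. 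Translation invariance is obtained by re-indexing the sum in (\ref{eq: def of two sided spec}) and invoking (\ref{tr-inv of the cocycle}).

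For part (ii), continuity of $\phi_\gamma$ is immediate from the quasi-locality and non-nullity of $\gamma_{\{0\}}$. The core of the proof is an exact evaluation of $\rho_n^{a,\tilde a}(\omega)$. Because $\phi_\gamma(\omega)$ depends only on $\omega_0,\omega_1,\omega_2,\dots$, the summands with index $i\ge 1$ in $\rho_n^{a,\tilde a}(\omega)$ cancel, so $\rho_n^{a,\tilde a}(\omega)=\sum_{j=0}^n F_j(\omega)$ with $F_j(\omega)=\phi_\gamma(S^{-j}\omega^a)-\phi_\gamma(S^{-j}\omega^{\tilde a})$. Writing $\phi_\gamma(S^{-j}\eta)$ explicitly as the logarithm of a ratio of values of $\gamma_{\{0\}}$ at the half-line $\eta_{-j}\eta_{-j+1}\cdots$ with the fixed $\la$-boundary on the left, one applies the bar-moving property (\ref{eq: bar moving prop}) twice — first to enlarge the varied site from $\{0\}$ to $\{0,j\}$, then to move it from $\{0,j\}$ down to $\{j\}$ — and then uses the translation invariance of $\gamma$, arriving at the telescoping relation $F_j(\omega)=\Psi_j(\omega)-\Psi_{j-1}(\omega)$ for $j\ge 1$ and $F_0(\omega)=\Psi_0(\omega)$, where
\[
\Psi_j(\omega)=\log\frac{\gamma_{\{0\}}(a|\omega^{(j)})}{\gamma_{\{0\}}(\tilde a|\omega^{(j)})},
\]
and $\omega^{(j)}$ denotes the configuration on $\Z\setminus\{0\}$ equal to $\omega_k$ for $k\ge -j$, $k\ne 0$, and equal to $\la$ for $k\le -j-1$. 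Hence $\rho_n^{a,\tilde a}(\omega)=\Psi_n(\omega)$.

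As $n\to\infty$ the boundary $\omega^{(n)}$ agrees with $\omega$ on the ever-growing set $\{k\ge -n\}\setminus\{0\}$, so by continuity and non-nullity of $\gamma_{\{0\}}$ the sequence $\Psi_n(\omega)$ converges uniformly in $\omega$ to $\log\big(\gamma_{\{0\}}(a|\omega_{\Z\setminus\{0\}})/\gamma_{\{0\}}(\tilde a|\omega_{\Z\setminus\{0\}})\big)$. This uniform convergence is precisely the extensibility of $\phi_\gamma$, so $\gamma^{\phi_\gamma}$ is defined through part (i), and the value of the limit identifies $\rho^{\phi_\gamma}(\omega^a,\omega^{\tilde a})$. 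Feeding this into the normalized formula from part (i) shows that $\gamma^{\phi_\gamma}_{\{0\}}$ and $\gamma_{\{0\}}$ have the same single-site ratios, hence, both being normalized, $\gamma^{\phi_\gamma}_{\{0\}}=\gamma_{\{0\}}$; translation invariance of both specifications gives $\gamma^{\phi_\gamma}_{\{j\}}=\gamma_{\{j\}}$ for every $j\in\Z$, and then $\gamma^{\phi_\gamma}_\Lambda=\gamma_\Lambda$ for every finite $\Lambda$, because the bar-moving property (\ref{eq: bar moving prop}) writes each ratio $\gamma_\Lambda(\zeta_\Lambda|\omega_{\Lambda^c})/\gamma_\Lambda(\zeta'_\Lambda|\omega_{\Lambda^c})$ as a product of single-site ratios. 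The one genuinely delicate point is the telescoping identity $F_j=\Psi_j-\Psi_{j-1}$: it is elementary but requires careful bookkeeping of the boundary configurations through the two bar-moving steps and the translation, and it is exactly there that subtracting the reference term $\gamma_{\{0\}}(\bla_0|\la_{-\infty}^{-1}\omega_1^\infty)$ in the definition of $\phi_\gamma$ is what makes the sum collapse.
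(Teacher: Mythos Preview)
Your proposal is correct and follows essentially the same route as the paper: for (i) the paper simply cites \cite{BGMMT2020}*{Propositions 2.12--2.13} for the cocycle--specification correspondence while you spell out the direct check, and for (ii) both proofs exploit the one-sidedness of $\phi_\gamma$ to discard the $i\ge 1$ terms, apply the bar-moving property twice together with translation invariance to obtain exactly the telescoping identity $\rho_n^{a,\tilde a}(\omega)=\Psi_n(\omega)$, and read off both extensibility and $\gamma^{\phi_\gamma}=\gamma$ from the limit. Your treatment is in fact a bit more explicit than the paper's at the end, where you spell out the passage from single-site equality to $\gamma^{\phi_\gamma}_\Lambda=\gamma_\Lambda$ for all $\Lambda$ via bar-moving; the paper just says ``the last limit also shows that $\gamma^{\phi_\gamma}=\gamma$.''
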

\begin{proof}
\textbf{\textit{{(i):}}}
The association between the Gibbsian specifications and the continuous cocycles given by (\ref{eq: def of two sided spec}) is known and holds in much greater generality than the setup of this paper \cite{BGMMT2020}*{Proposition 2.12 and Proposition 2.13}.
Furthermore, $\gamma^\phi$ is translation-invariant if and only if $\rho^\phi$ is translation-invariant, and the latter has been shown in the above argument (see also Subsection 2.2 and 2.3 in \cite{BGMMT2020}).

\bigskip

\textbf{\textit{{(ii):}}}
Pick any $a_0, b_0\in E$ and $\omega\in\Omega$.
Since $\phi_\gamma$ is a one-sided function, one can check that for any $n\in\N$,
\begin{equation}
    \rho_n^{a_0,b_0}(\omega)-\rho_0^{a_0,b_0}(\omega)
    =
    \sum_{i=1}^n\Big[\phi_{\gamma}\circ S^{-i}(b_0\omega_{\{0\}^c})-\phi_{\gamma}\circ S^{-i}(a_0\omega_{\{0\}^c}) \Big]
\end{equation}
and for the right-hand side of the above equation, one has
\begin{equation}
    RHS
    =
    \sum_{i=1}^n\log \frac{\gamma_{\{0\}}((\omega_{-i})_0|\bla_{-\N}(\omega_{-i+1}^{-1}b_0\omega_1^{\infty})_{1}^\infty )}
    {\gamma_{\{0\}}(\bla_0|\bla_{-\N}(\omega_{-i+1}^{-1}b_0\omega_1^{\infty})_{1}^\infty )}
    \cdot
    \frac{\gamma_{\{0\}}(\bla_0|\bla_{-\N}(\omega_{-i+1}^{-1}a_0\omega_1^{\infty})_{1}^\infty )}
    {\gamma_{\{0\}}((\omega_{-i})_0|\bla_{-\N}(\omega_{-i+1}^{-1}a_0\omega_1^{\infty})_{1}^\infty )}.
\end{equation}
Then, by applying the bar moving property, one gets
\begin{multline*}
    \rho_n^{a_0,b_0}(\omega)-\rho_0^{a_0,b_0}(\omega)
    =\\
    \sum_{i=1}^n
    \log \frac{\gamma_{\{0,i\}}((\omega_{-i})_0(b_0)_i|\bla_{-\N}(\omega_{-i+1}^{-1})_1^{i-1} (\omega_1^\infty)_{i+1}^\infty ) }
    {\gamma_{\{0,i\}}(\bla_0(b_0)_i|\bla_{-\N}(\omega_{-i+1}^{-1})_1^{i-1} (\omega_1^\infty)_{i+1}^\infty ) }
    \cdot
    \frac{\gamma_{\{0,i\}}(\bla_0(a_0)_i|\bla_{-\N}(\omega_{-i+1}^{-1})_1^{i-1} (\omega_1^\infty)_{i+1}^\infty ) }
    {\gamma_{\{0,i\}}((\omega_{-i})_0(a_0)_i|\bla_{-\N}(\omega_{-i+1}^{-1})_1^{i-1} (\omega_1^\infty)_{i+1}^\infty ) }
\end{multline*}
and thus, by applying the bar moving property once more,
\begin{multline*}
\rho_n^{a_0,b_0}-\rho_0^{a_0,b_0}
    =
    \sum_{i=1}^n\log \frac{\gamma_{\{i\} }((b_0)_i | \bla_{-\N} (\omega_{-i}^{-1})_0^{i-1} (\omega_1^\infty)_{i+1}^\infty )}
    {\gamma_{\{i\} }((a_0)_i | \bla_{-\N} (\omega_{-i}^{-1})_0^{i-1} (\omega_1^\infty)_{i+1}^\infty )}
    \cdot
    \frac{\gamma_{\{i\} }((a_0)_i | \bla_{\Z_-} (\omega_{-i+1}^{-1})_1^{i-1} (\omega_1^\infty)_{i+1}^\infty )}
    {\gamma_{\{i\} }((b_0)_i | \bla_{\Z_-} (\omega_{-i+1}^{-1})_1^{i-1} (\omega_1^\infty)_{i+1}^\infty )},
\end{multline*}
here $\Z_-$ denotes $-\N\cup \{0\}$.
Hence, by the translation-invariance of $\gamma$,
\begin{eqnarray}
    \rho_n^{a_0,b_0}(\omega)-\rho_0^{a_0,b_0}(\omega)
    \notag &=&
    \sum_{i=1}^n\log\frac{\gamma_{\{0\}}(b_0|\bla_{-\infty}^{-i-1}\omega_{-i}^{-1}\omega_1^{\infty} ) }
    {\gamma_{\{0\}}(a_0|\bla_{-\infty}^{-i-1}\omega_{-i}^{-1}\omega_1^{\infty} )} 
    \cdot
    \frac{\gamma_{\{0\}}(a_0|\bla_{-\infty}^{-i}\omega_{-i+1}^{-1}\omega_1^{\infty} )}
    {\gamma_{\{0\}}(b_0|\bla_{-\infty}^{-i}\omega_{-i+1}^{-1}\omega_1^{\infty} )}\\
    \notag &=&
    \sum_{i=1}^n\log\frac{\gamma_{\{0\}}(b_0|\bla_{-\infty}^{-i-1}\omega_{-i}^{-1}\omega_1^{\infty} ) }
    {\gamma_{\{0\}}(a_0|\bla_{-\infty}^{-i-1}\omega_{-i}^{-1}\omega_1^{\infty} )} 
    -
    \sum_{i=0}^{n-1}
    \log\frac{\gamma_{\{0\}}(b_0|\bla_{-\infty}^{-i-1}\omega_{-i}^{-1}\omega_1^{\infty} ) }
    {\gamma_{\{0\}}(a_0|\bla_{-\infty}^{-i-1}\omega_{-i}^{-1}\omega_1^{\infty} )}\\
    \notag &=&
    \log\frac{\gamma_{\{0\}}(b_0|\bla_{-\infty}^{-n-1}\omega_{-n}^{-1}\omega_1^{\infty} ) }
    {\gamma_{\{0\}}(a_0|\bla_{-\infty}^{-n-1}\omega_{-n}^{-1}\omega_1^{\infty} )}
    -
    \log\frac{\gamma_{\{0\}}(b_0|\bla_{-\infty}^{-1}\omega_1^{\infty} ) }
    {\gamma_{\{0\}}(a_0|\bla_{-\infty}^{-1}\omega_1^{\infty} )}.
\end{eqnarray}
Thus
\begin{equation}
    \rho_n^{a_0,b_0}(\omega)
    =
    \log\frac{\gamma_{\{0\}}(b_0|\bla_{-\infty}^{-n-1}\omega_{-n}^{-1}\omega_1^{\infty} ) }
    {\gamma_{\{0\}}(a_0|\bla_{-\infty}^{-n-1}\omega_{-n}^{-1}\omega_1^{\infty} )}\;
    \darrow[\substack{n\to\infty\\\omega\in\Omega}]\;
    \log\frac{\gamma_{\{0\}}(b_0|\omega_{-\infty}^{-1}\omega_1^{\infty} ) }
    {\gamma_{\{0\}}(a_0|\omega_{-\infty}^{-1}\omega_1^{\infty} )}.
\end{equation}
The last limit also shows that $\gamma^{\phi_\gamma}=\gamma$.
\end{proof}

\section{Proofs}\label{proofs of main results}
The proofs of Theorem A and Theorem B will be based on two lemmas.
Our first lemma states the following.
\begin{lemma}\label{lemma: weak cohomologous property}
    Let $\phi\in C(\Omega)$ be an extensible function and $\gamma^\phi$ be the associated Gibbsian specification.
    Then the one-sided extensible function $\phi_{\gamma^\phi}\in C(\Omega_+)$ is \textit{weakly cohomologous} to $\phi$, i.e., there exists $C\in\R$ such that for all $\tau\in\M_{1,S}(\Omega)$,
    \begin{equation}
        \int_\Omega \phi_{\gamma^\phi}d\tau 
        =\int_\Omega\phi d\tau+C.
    \end{equation}
\end{lemma}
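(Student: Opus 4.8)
The plan is to compute $\phi_{\gamma^\phi}$ explicitly in terms of the cocycle $\rho^\phi$ and then to read off the weak cohomology relation from the cocycle equation. By definition,
$$
\phi_{\gamma^\phi}(\omega)=\log\frac{\gamma^\phi_{\{0\}}(\omega_0\mid \bla_{-\infty}^{-1}\omega_1^\infty)}{\gamma^\phi_{\{0\}}(\bla_0\mid \bla_{-\infty}^{-1}\omega_1^\infty)},
$$
and using the formula (\ref{eq: def of two sided spec}) for $\gamma^\phi_{\{0\}}$ the normalizing sums cancel, giving
$$
\phi_{\gamma^\phi}(\omega)=\rho^\phi\big(\bla_{-\infty}^{-1}\bla_0\omega_1^\infty,\ \bla_{-\infty}^{-1}\omega_0\omega_1^\infty\big)
=-\rho^\phi\big(\bla_{-\infty}^{-1}\omega_0\omega_1^\infty,\ \bla_{-\infty}^{-1}\bla_0\omega_1^\infty\big).
$$
So $\phi_{\gamma^\phi}$ measures, via the cocycle, the cost of changing the coordinate at site $0$ from $\la$ to $\omega_0$ against a left-tail of $\la$'s.

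Next I would relate this to $\phi$ itself. Write $\omega^{\la}$ for the configuration obtained from $\omega$ by setting coordinate $0$ equal to $\la$. The natural guess is
$$
\phi_{\gamma^\phi}(\omega)-\phi(\omega)=-\rho^\phi(\bla_{-\infty}^{-1}\omega_0\omega_1^\infty,\ \omega)-\big(\phi(\omega^{\la})-\phi(\omega)\big)+\big(\text{a }\la\text{-dependent constant}\big)?
$$
More precisely, using translation-invariance (\ref{tr-inv of the cocycle}) and the cocycle equation (\ref{cocycle equation}), one should be able to exhibit a continuous function $g:\Omega\to\R$ with $\phi_{\gamma^\phi}=\phi+g-g\circ S^{-1}$ up to an additive constant — i.e.\ $\phi_{\gamma^\phi}$ and $\phi$ differ by a telescoping ("weak coboundary") term plus a constant. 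The candidate for $g$ is the "renormalized tail sum" $g(\omega)=\lim_{n\to\infty}\sum_{i\le -1}\big[\phi\circ S^i(\bla_{-\infty}^{-1}\omega_0^\infty)-\phi\circ S^i(\omega)\big]$ (suitably truncated and centered so that it converges by extensibility), which is exactly the object controlled by $\rho^\phi$. Integrating the relation $\phi_{\gamma^\phi}=\phi+g-g\circ S^{-1}+C$ against any shift-invariant $\tau$ kills the coboundary term, since $\int g\circ S^{-1}\,d\tau=\int g\,d\tau$ by $S$-invariance (one must check $g\in L^1(\tau)$, which follows from boundedness of $g$, itself a consequence of the uniform convergence in the extensibility condition). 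This yields $\int\phi_{\gamma^\phi}\,d\tau=\int\phi\,d\tau+C$ for all $\tau\in\M_{1,S}(\Omega)$.

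An alternative and perhaps cleaner route, avoiding the explicit coboundary, is to argue directly on ergodic averages: for $\tau$-a.e.\ $\omega$ the Birkhoff sums $\frac1N\sum_{k=0}^{N-1}\phi_{\gamma^\phi}(S^k\omega)$ and $\frac1N\sum_{k=0}^{N-1}\phi(S^k\omega)$ differ by $\frac1N$ times a sum that telescopes up to boundary terms of the form $\rho^\phi$ evaluated at two configurations agreeing off a bounded window; by uniform boundedness of $\rho^\phi$ on such pairs (again from extensibility) this difference is $O(1/N)\to 0$, so the two ergodic averages have the same limit, and integrating over ergodic components (then all of $\M_{1,S}$ by the ergodic decomposition) gives the claim with $C$ the common value of the boundary contribution. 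I would likely present whichever of these is shorter once the bookkeeping is fixed.

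The main obstacle I anticipate is purely bookkeeping: making the definition of $g$ (equivalently, identifying the telescoping structure) precise so that all the partial sums converge uniformly — this requires carefully splitting the one-sided tail sums defining $\phi_{\gamma^\phi}$ and matching them against the two-sided sums defining $\rho^\phi$, exactly the kind of index-shuffling carried out in the proof of Theorem~\ref{defs of associated two-sided spec and extensible potential}(ii). Once the identity $\phi_{\gamma^\phi}=\phi+g-g\circ S^{-1}+C$ (or its Birkhoff-average analogue) is established with $g$ bounded and continuous, the integration step is immediate.
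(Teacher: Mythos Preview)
Your proposal has a genuine gap in both routes. For the first route, the candidate transfer function
\[
g(\omega)=\text{``}\sum_{i\le -1}\bigl[\phi\circ S^i(\bla_{-\infty}^{-1}\omega_0^\infty)-\phi\circ S^i(\omega)\bigr]\text{''}
\]
need not converge: extensibility only controls the \emph{two-sided} sums $\sum_{i=-n}^n[\phi\circ S^i(\xi)-\phi\circ S^i(\eta)]$ for configurations $\xi,\eta$ differing at \emph{finitely many} sites, whereas your $g$ involves a one-sided sum with $\bla_{-\infty}^{-1}\omega_0^\infty$ and $\omega$ differing on all of $-\N$. There is no mechanism in the extensibility hypothesis to make this converge, and the parenthetical ``suitably truncated and centered'' does not indicate one. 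In fact, the identity $\phi_{\gamma^\phi}=\phi+g-g\circ S^{-1}+C$ with bounded continuous $g$ is \emph{strictly stronger} than weak cohomology (it would give $\|S_n(\phi_{\gamma^\phi}-\phi-C)\|_\infty=O(1)$ rather than merely $o(n)$), and there is no reason to expect it to hold at this level of generality.

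For the second route, the telescoping via the cocycle gives
\[
\sum_{k=0}^{N-1}\phi_{\gamma^\phi}(S^k\omega)=\rho^\phi\bigl(\bla_{-\infty}^{-1}\omega_0^\infty,\ \bla_{-\infty}^{N-1}\omega_N^\infty\bigr),
\]
but the two configurations here disagree on a window of size $N$, not a bounded one; so your claim that the boundary term is ``$\rho^\phi$ evaluated at two configurations agreeing off a bounded window'' is incorrect, and uniform boundedness of this term is not available.

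The paper's argument avoids both problems by reversing the order of operations: it integrates the finite-$n$ approximant $\rho_n$ against $\tau$ \emph{first}, then uses the shift-invariance of $\tau$ (via the substitution $S^i\omega\mapsto\omega$ in each summand) to produce a telescoping sum of integrals
\[
I_n(\tau)=\sum_{i=-n}^n\Bigl[\int\phi\circ\Theta_{(-\infty,-i-1]}\,d\tau-\int\phi\circ\Theta_{(-\infty,-i]}\,d\tau\Bigr]
=\int\phi\circ\Theta_{(-\infty,-n-1]}\,d\tau-\int\phi\circ\Theta_{(-\infty,n]}\,d\tau,
\]
where $\Theta_{(-\infty,m]}$ overwrites coordinates $\le m$ by $\la$. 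The two surviving integrands converge uniformly to $\phi$ and to the constant $\phi(\bla)$ respectively, giving $C=-\phi(\bla)$. The telescoping thus happens at the level of integrals, where no pointwise convergence of tail sums is needed.
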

\begin{remark}
The second part of Theorem \ref{defs of associated two-sided spec and extensible potential} implies that the chain $\gamma\rightarrow \phi_\gamma\rightarrow \gamma^{\phi_\gamma}$ is closed, i.e., $\gamma=\gamma^{\phi_\gamma}$.
For an extensible function $\phi$, the diagram $\phi\rightarrow \gamma^\phi\rightarrow \phi_{\gamma^\phi}$ is, in general, not closed, i.e., it is not always true that $\phi=\phi_{\gamma^\phi}$.
\end{remark}
\begin{proof}
Note that $\phi_{\gamma^\phi}$ is a half-line function. 
For any $\omega_{\Z_+}\in \Omega_+$, one can easily check the following:
\begin{equation}\label{from extendible phi to u_gamma}
    \phi_{\gamma^\phi}(\omega_{\Z_+})
    =
    \lim_{n\to\infty}
    \sum_{i=-n}^{n}\Big[\phi\circ S^{i}(\bla_{-\infty}^{-1}\omega_{0}^\infty)
    -\phi\circ S^{i}(\bla_{-\infty}^{0}\omega_{1}^\infty)
    \Big]
\end{equation}
and the above limit is uniform on $\omega_{\Z_+}\in \Omega_+$.
For any $\tau\in \M_{1,S}(\Omega_+)$ and $n\in\N$, denote 
\begin{eqnarray*}
    I_n(\tau)
    &:=&
    \int_\Omega
    \sum_{i=-n}^{n}\Big[\phi\circ S^{i}(\bla_{-\infty}^{-1}\omega_{0}^\infty)
    -\phi\circ S^{i}(\bla_{-\infty}^{0}\omega_{1}^\infty)
    \Big]\tau(d\omega).
\end{eqnarray*}
For any $\ell_1,\ell_2\in \Z\cup\{-\infty, \infty\}$ with $\ell_1\leq \ell_2$, define a transformation 
\begin{equation}
   \Theta_{[\ell_1,\ell_2]}(\omega)_i:=\begin{cases}
       \omega_i, & \text{ if } i\notin [\ell_1,\ell_2];\\
       {\la}, & \text{ if } i\in [\ell_1,\ell_2].
   \end{cases} 
\end{equation}
If $\ell_1\nleq \ell_2 $, then $[\ell_1,\ell_2]=\emptyset$, therefore, for all $\omega\in \Omega$, we set $\Theta_{[\ell_1\ell_2]}(\omega)=\omega$.
Note that the families $\{S_{\ell}:\ell\in\Z\}$ and $\{\Theta_{[\ell_1,\ell_2]}:\ell_1,\ell_2\in\Z,\ell_1\leq \ell_2\}$ of the transformations on $\Omega$
have the following commutativity-type property:
for any $\ell_{1,2,3}\in\Z$ with $\ell_1\leq \ell_2$, 
\begin{equation}\label{imp identity on Theta}
  S_{\ell_3}\circ \Theta_{[\ell_1,\ell_2]}
   =
   \Theta_{[\ell_1-\ell_3,\ell_2-\ell_3]}\circ S_{\ell_3}.
\end{equation}
Thus $I_n(\tau)$ is written in terms of the transformations $\Theta$ as follows:
\begin{equation}
    I_n(\tau)
    =
    \sum_{i=-n}^n\int_\Omega\phi\circ \Theta_{(-\infty,-i-1]}\circ S^{i}(\omega)\tau(d\omega)
    -
    \sum_{i=-n}^n\int_\Omega\phi\circ \Theta_{(-\infty,-i]}\circ S^{i}(\omega)\tau(d\omega)
\end{equation}
Then, by the translation-invariance of measure $\tau$,
\begin{eqnarray}
    I_n(\tau)
    &=&
    \sum_{i=-n}^n\int_\Omega\phi\circ \Theta_{(-\infty,-i-1]}d\tau
    -
    \sum_{i=-n}^n\int_\Omega\phi\circ \Theta_{(-\infty,-i]}d\tau\\
    &=&
    \int_\Omega \phi\circ\Theta_{(-\infty,-n-1]} d\tau
    -
    \int_\Omega \phi\circ \Theta_{(-\infty,n]}d\tau.
\end{eqnarray}
By continuity of $\phi$, the sequences $\{\phi\circ \Theta_{(-\infty,-n-1]}(\omega)\}_{n\in\Z_+}$ and $\{\phi\circ \Theta_{(-\infty,n]}(\omega)\}_{n\in\Z_+}$ converge uniformly in $\omega\in \Omega$ to $\phi(\omega)$ and $\phi(\bla)$, respectively, as $n\to\infty$.
Thus one concludes that
\begin{equation}\label{limit of I_n(tau) integrals}
    \lim_{n\to\infty}I_n(\tau)=\int_{\Omega}\phi d\tau-\phi(\bla).
\end{equation}
Since the limit in (\ref{from extendible phi to u_gamma}) is uniform, we conclude from (\ref{limit of I_n(tau) integrals}) that
\begin{equation}
    \int_{\Omega}\phi_{\gamma^\phi}(\omega)\tau(d\omega)
    =
    \int_{\Omega}\phi(\omega)\tau(d\omega)-\phi(\bla).
\end{equation}

\end{proof}

Now, we formulate the second lemma.
\begin{lemma}\label{lemma: our VP and weak Bowen-Gibbsianity}
    Let $\gamma$ be a translation-invariant Gibbsian specification and $\phi_\gamma$ be the associated extensible function.
    Then
    \begin{itemize}
        \item[(i)] Every translation-invariant Gibbs state $\mu\in\G_S(\gamma)$ is weak Bowen-Gibbs with respect to $\phi_\gamma$, i.e., $\mu$ satisfies (\ref{eq: weak Bowen-Gibbs property}); 
        \item[(ii)] The set of translation-invariant Gibbs states for $\gamma$ coincides with the set of the equilibrium states for $\phi_\gamma$, i.e., $\G_S(\gamma)=\es(\phi_\gamma)$.
    \end{itemize}
\end{lemma}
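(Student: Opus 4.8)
The plan is to prove (i) directly by unfolding the DLR equations and using the two technical lemmas of \cite{KLNR2004} quoted as Lemma \ref{lemma: pressure of specification and variation of spec}, and then to deduce (ii) from (i) together with the variational principle (Theorem \ref{VP by KLNR}). For part (i), fix a translation-invariant Gibbs state $\mu\in\G_S(\gamma)$ and a configuration $\omega\in\Omega$. Applying the DLR equations to the indicator of the cylinder $[\omega_0^{n-1}]$ with respect to the volume $\Lambda=[0,n-1]$ gives
$$
\mu([\omega_0^{n-1}])=\int_\Omega \gamma_{[0,n-1]}(\omega_0^{n-1}\mid \eta_{[0,n-1]^c})\,\mu(d\eta).
$$
The idea is to compare the integrand with $\gamma_{[0,n-1]}(\bla_{[0,n-1]}\mid \eta_{[0,n-1]^c})$ and with $\exp(S_n\phi_\gamma(\omega))$ up to subexponential factors. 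First I would rewrite
$$
\gamma_{[0,n-1]}(\omega_0^{n-1}\mid \eta_{[0,n-1]^c})
=\gamma_{[0,n-1]}(\bla_{[0,n-1]}\mid \eta_{[0,n-1]^c})\cdot
\frac{\gamma_{[0,n-1]}(\omega_0^{n-1}\mid \eta_{[0,n-1]^c})}{\gamma_{[0,n-1]}(\bla_{[0,n-1]}\mid \eta_{[0,n-1]^c})},
$$
and then observe, using part (i) of Lemma \ref{lemma: pressure of specification and variation of spec}, that the ratio on the right differs from the boundary-free ratio obtained by replacing $\eta_{[0,n-1]^c}$ with $\omega_{[0,n-1]^c}$ only by a factor $e^{o(n)}$, uniformly in $\eta,\omega$. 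Next I would identify this boundary-free ratio with $\exp(S_n\phi_\gamma(\omega)+o(n))$: by the bar-moving property and telescoping exactly as in the proof of Theorem \ref{defs of associated two-sided spec and extensible potential}(ii), the ratio $\gamma_{[0,n-1]}(\omega_0^{n-1}\mid \omega_{[0,n-1]^c})/\gamma_{[0,n-1]}(\bla_{[0,n-1]}\mid \omega_{[0,n-1]^c})$ equals a product of single-site ratios that converges (after the appropriate shift) to $\sum_{k=0}^{n-1}\phi_\gamma(S^k\omega)$, with the error controlled by the uniform convergence of the cocycle $\rho_n^{a_0,b_0}$. Finally, part (ii) of Lemma \ref{lemma: pressure of specification and variation of spec} gives $-\log\gamma_{[0,n-1]}(\bla_{[0,n-1]}\mid\eta_{[0,n-1]^c})=nP^\bla(\gamma)+o(n)$ uniformly in $\eta$. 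Combining these three estimates, integrating against $\mu$, and absorbing all $e^{o(n)}$ factors into a single subexponential sequence $\{C_n\}$ yields
$$
\frac{1}{C_n}\le \frac{\mu([\omega_0^{n-1}])}{\exp\bigl(S_n\phi_\gamma(\omega)-nP^\bla(\gamma)\bigr)}\le C_n,
$$
which is precisely (\ref{eq: weak Bowen-Gibbs property}) with $P=P^\bla(\gamma)$.

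For part (ii) I would argue by mutual inclusion. The inclusion $\G_S(\gamma)\subseteq\es(\phi_\gamma)$ is immediate from (i): a weak Bowen-Gibbs state is an equilibrium state for the same potential (this is the elementary fact recalled in the introduction, cf.\ \cite{PS2018}), and a standard pressure-identification argument shows $P(\phi_\gamma)=P^\bla(\gamma)$. For the reverse inclusion $\es(\phi_\gamma)\subseteq\G_S(\gamma)$, take any $\tau\in\es(\phi_\gamma)$ and any fixed $\mu\in\G_S(\gamma)$ (which is nonempty). Both $\tau$ and $\mu$ are equilibrium states for $\phi_\gamma$: $\mu$ by the inclusion just proven, $\tau$ by hypothesis. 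Hence $h(\tau)+\int\phi_\gamma\,d\tau=h(\mu)+\int\phi_\gamma\,d\mu=P(\phi_\gamma)$. The specific relative entropy $h(\tau\mid\mu)$ exists by Theorem \ref{VP by KLNR}; the plan is to show it equals zero. Using the weak Bowen-Gibbs bounds from (i) applied to $\mu$, one has $-\frac1n\log\mu([\omega_0^{n-1}])=-\frac1n S_n\phi_\gamma(\omega)+P(\phi_\gamma)+o(1)$ uniformly, so by the Shannon--McMillan--Breiman theorem the relative entropy density decomposes as
$$
h(\tau\mid\mu)=-h(\tau)-\int\phi_\gamma\,d\tau+P(\phi_\gamma)=0.
$$
Then Theorem \ref{VP by KLNR} gives $\tau\in\G_S(\gamma)$, completing the proof.

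The main obstacle I anticipate is the bookkeeping in the first step of part (i): one must carefully track that all the error terms --- the boundary-dependence of the specification ratios (controlled by Lemma \ref{lemma: pressure of specification and variation of spec}(i)), the discrepancy between the finite-$n$ cocycle $\rho_n^{a_0,b_0}$ and its uniform limit, and the difference between the window $[0,n-1]$ and the symmetric window $[-n,n]$ used in the definition of extensibility --- are genuinely $o(n)$ uniformly in the configuration, and that the telescoping of single-site ratios into $S_n\phi_\gamma$ is done with the correct shifts (the computation in Theorem \ref{defs of associated two-sided spec and extensible potential}(ii) is the template, but here it must be carried out at every site $0,1,\dots,n-1$ simultaneously rather than at a single site). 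Once the uniformity of all these error terms is secured, assembling them into one subexponential sequence $\{C_n\}$ is routine, and part (ii) follows cleanly from the variational principle.
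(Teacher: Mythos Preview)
Your plan is correct and matches the paper's approach: DLR equations, ratio against $\gamma_{[0,n]}(\bla_{[0,n]}|\cdot)$, Lemma \ref{lemma: pressure of specification and variation of spec}, bar-moving telescoping into $S_n\phi_\gamma$, and then the relative-entropy identity $h(\tau|\mu)=-h(\tau)-\int\phi_\gamma\,d\tau+P(\phi_\gamma)$ combined with Theorem \ref{VP by KLNR}. The paper sidesteps the bookkeeping you anticipate by replacing the boundary $\eta$ with $\bla$ rather than with $\omega$: since $\phi_\gamma$ is one-sided, the telescoped product then equals $\exp\bigl(S_{n+1}\phi_\gamma(\sigma_{[0,n]}\bla_{[0,n]^c})\bigr)$ \emph{exactly}, so the only remaining error is the elementary $\mathtt{var}_n(S_{n+1}\phi_\gamma)\le\sum_{k=0}^n\mathtt{var}_k\phi_\gamma=o(n)$, and no cocycle-convergence tracking is needed.
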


\begin{proof}
\textbf{\textit{(i):}}
Now we shall prove that any translation-invariant DLR Gibbs measure $\mu$ prescribed by a specification $\gamma$ is weak Bowen-Gibbs relative to the potential $\phi_\gamma$.

The first part of Lemma \ref{lemma: pressure of specification and variation of spec} and translation-invariance of the specification $\gamma$ imply
\begin{equation}\label{partition functions on half line}
    \lim_{n\to\infty}-\frac{1}{n} \log \gamma_{[0,n]}(\bla_{[0,n]}|\omega_{[0,n]^c})
    =
    P^\bla(\gamma)
\end{equation}
and the convergence is uniform in $\omega\in \Omega$.

Now consider a configuration $\sigma\in \Omega_+$ and a cylindric set $[\sigma_0^n]$, then by the DLR equations, 
\begin{eqnarray}
    \mu([\sigma_0^n])
    \notag &=&
    \int_X \gamma_{[0,n]}(\sigma_{[0,n]}|\eta_{[0,n]^c})\mu(d\eta)\\
    &=&
    \int_X\frac{\gamma_{[0,n]}(\sigma_{[0,n]}|\eta_{[0,n]^c})}
    {\gamma_{[0,n]}(\bla_{[0,n]}|\eta_{[0,n]^c})}
    \cdot
    \gamma_{[0,n]}(\bla_{[0,n]}|\eta_{[0,n]^c})\mu(d\eta).
\end{eqnarray}
By translation-invariance of the specification $\gamma$, Lemma \ref{lemma: pressure of specification and variation of spec} yields that 
\begin{eqnarray}
    \mu([\sigma_0^n])
    \notag 
    &=&
    \int_X\frac{\gamma_{[0,n]}(\sigma_{[0,n]}|\eta_{[0,n]^c})}
    {\gamma_{[0,n]}(\bla_{[0,n]}|\eta_{[0,n]^c})}
    \cdot
    e^{-n P^\bla(\gamma)+o(n)}\mu(d\eta),
\end{eqnarray}
here the error factor $o(n)$ is independent on $\eta$ and only depends on $n$. 
Therefore, 
\begin{eqnarray}\label{eq: measure of cylinders approach via the ratios of spec}
    \mu([\sigma_0^n])
    &=&
    e^{-n P^\bla(\gamma)+o(n)}
    \int_X\frac{\gamma_{[0,n]}(\sigma_{[0,n]}|\eta_{[0,n]^c})}
    {\gamma_{[0,n]}(\bla_{[0,n]}|\eta_{[0,n]^c})}
    \mu(d\eta),
\end{eqnarray}
Hence, by taking into account (\ref{eq: est about decay rate of spec ratios}), we obtain that 
\begin{equation}\label{eq: meas of cylinders in terms of spec ratios with + bc}
    \mu([\sigma_0^n])
    =
    e^{-n P^\bla(\gamma)}
    \cdot
    \frac{\gamma_{[0,n]}(\sigma_{[0,n]}|\bla_{[0,n]^c})}
    {\gamma_{[0,n]}(\bla_{[0,n]}|\bla_{[0,n]^c})}
    \cdot e^{o(n)}.
\end{equation}
Using the bar moving property (\ref{eq: bar moving prop}), we have that 
\begin{eqnarray}\label{eq: spec ratios and the Birkhoff sum}
    \frac{\gamma_{[0,n]}(\sigma_{[0,n]}|\bla_{[0,n]^c})}
    {\gamma_{[0,n]}(\bla_{[0,n]}|\bla_{[0,n]^c})}
    &=&
    \prod_{i=0}^n\frac{\gamma_{[0,n]}(\bla_{[0,i)}\sigma_{[i,n]}|\bla_{[0,n]^c})}
    {\gamma_{[0,n]}(\bla_{[0,i]}\sigma_{(i,n]}|\bla_{[0,n]^c})} 
    \overset{(\ref{eq: bar moving prop})}{=}
    \prod_{i=0}^n 
    \frac{\gamma_{\{i\}}(\sigma_{i}| \sigma_{(i,n]}{\bla}_{[i,n]^c})}
    {\gamma_{\{i\}}({\bla}_{i}|\sigma_{(i,n]}{\bla}_{[i,n]^c})}\\
    \notag &=&
    \prod_{i=0}^n e^{\phi_\gamma\circ S^i(\sigma_{[0,n]}\bla_{[0,n]^c})}.
\end{eqnarray}
Note that in the last equation of (\ref{eq: spec ratios and the Birkhoff sum}), we used the fact that $\phi_\gamma$ is independent of the components in the negative half-line $-\N$.
Combining (\ref{eq: spec ratios and the Birkhoff sum}) with (\ref{eq: meas of cylinders in terms of spec ratios with + bc}), we get
\begin{equation}\label{eq: weak Bowen-Gibbs est with + b.c.}
    \mu([\sigma_0^n])
    =
    \frac{e^{S_{n+1}\phi_\gamma(\sigma_{[0,n]}\bla_{[0,n]^c})}}
    {e^{n P^\bla(\gamma)}}
    \cdot e^{o(n)}.
\end{equation}
Note that 
\begin{equation}\label{eq: var of S_n phi in terms of var of phi}
   \mathtt{ var}_n(S_{n+1}\phi_\gamma)\leq \sum_{k=0}^n \mathtt{var}_k(\phi_\gamma)
\end{equation}
and since $\phi_\gamma$ is continuous, $var_k(\phi_\gamma)\to 0$ as $k\to\infty$.
Thus $\mathtt{var}_n(S_{n+1}\phi_\gamma)=o(n)$, and hence we obtain from 
(\ref{eq: weak Bowen-Gibbs est with + b.c.}) that 
\begin{equation}\label{eq: weak Bowen-Gibbs est with any config}
    \mu([\sigma_0^n])
    =
    \frac{e^{S_{n+1}\phi_\gamma(\sigma)}}
    {e^{n P^\bla(\gamma)}}e^{o(n)}.
\end{equation}
We note that the weak Bowen-Gibbs property (\ref{eq: weak Bowen-Gibbs est with any config}) implies that $P^\bla(\gamma)=P(\phi_{\gamma})$, where $P(\phi_\gamma)$ is the topological pressure of $\phi_{\gamma}$.
\vspace{0.3cm}

\textbf{\textit{ (ii):}}
Now take any $\tau\in\M_{1,S}(\Omega)$ and $\mu\in\G_S(\gamma)$.
The relative entropy $H_n(\tau|\mu)$ is given by 
\begin{equation}\label{grouping the sum in rel ent in fin vol}
    H_n(\tau|\mu)
    =    \sum_{a_{0}^{n}\in E^{n+1}}\tau([a_{0}^{n}])\log\frac{\tau([a_{0}^{n}])}{\mu([a_{0}^{n}])}
    =
    \sum_{a_{0}^{n}\in E^{n+1}}\tau([a_{0}^{n}])\log\tau([a_{0}^{n}]).
    -
    \sum_{a_{0}^{n}\in E^{n+1}}\tau([a_{0}^{n}])\log\mu([a_{0}^{n}])\end{equation}
For the first sum in (\ref{grouping the sum in rel ent in fin vol}), one has that
\begin{equation}\label{eq: existence of specific ent}
    \frac{1}{n}\sum_{a_{0}^{n}\in E^{n+1}}\tau([a_{0}^{n}])\log\tau([a_{0}^{n}])
    \xrightarrow[n\to\infty]{} -h(\tau).
\end{equation}
For the second sum, by inserting (\ref{eq: var of S_n phi in terms of var of phi}) and using (\ref{eq: weak Bowen-Gibbs est with any config}), one has
\begin{eqnarray}
    \sum_{a_{0}^{n}\in E^{n+1}}\tau([a_{0}^{n}])\log\mu([a_{0}^{n}])
    \notag &=&
    \sum_{a_{0}^{n}\in E^{n+1}}\tau([a_{0}^{n}]) (S_{n+1}\phi_\gamma-(n+1)P^\bla(\gamma)+o(n)\\
    &=&
    (n+1)\int_\Omega\phi_\gamma d\tau -(n+1)P^\bla(\gamma) +o(n). \label{estimation for the second sum in rel ent}
\end{eqnarray}
By combining (\ref{eq: existence of specific ent}) and (\ref{estimation for the second sum in rel ent}), and since $P^\bla(\gamma)=P(\phi_{\gamma})$, one concludes that the relative entropy density rate $h(\tau|\mu)$ indeed exists and
\begin{equation}
    h(\tau|\mu)=\lim_{n\to\infty} \frac{1}{n}H_n(\tau|\mu)=-h(\tau)-\int_\Omega\phi_\gamma d\tau + P(\phi_{\gamma}).
\end{equation}
Thus the Variational Principle (Theorem \ref{VP by KLNR}) yields that $\tau$ is a Gibbs state for $\gamma$ if and only if $\tau$ is an equilibrium state for $\phi_\gamma$, i.e., $\G_S(\gamma)=\es(\phi_\gamma)$.

\end{proof}
\begin{proof}[Proof of Theorem \ref{Th A: ES for extensible potentials are DLR and Bowen-Gibbs}]
    One can easily see that weakly cohomologous potentials have the same equilibrium states because the functionals 
$\tau\in\M_{1,S}(\Omega)\mapsto h(\tau)+\int_\Omega \phi d\tau$ and $\tau\in\M_{1,S}(\Omega)\mapsto h(\tau)+\int_\Omega \phi_{\gamma^\phi} d\tau$
differ by only a constant $P(\phi_{\gamma^\phi})-P(\phi)$.
Furthermore, the weak cohomology between $\phi_{\gamma^\phi}$ and $\phi$ also yields the following
\cite{EFS1993}*{Proposition 2.34}:
\begin{equation}\label{sup norm corollary from weak coboundary}
    \lim_{n\to\infty}\frac{1}{n}\Big\rVert \sum_{i=0}^{n-1} \Big[\phi_{\gamma^\phi}-\phi -P(\phi_{\gamma^\phi})+P(\phi)\Big]\circ S^i \Big\lVert_\infty
    =
    0.
\end{equation}
Then the first part of Theorem A follows from Lemma \ref{lemma: weak cohomologous property} and the second part of Lemma \ref{lemma: our VP and weak Bowen-Gibbsianity} since the weak cohomologous potentials have the same set of equilibrium states, i.e., $\es(\phi)=\es(\phi_{\gamma^\phi})$. 
The second part of Theorem A also follows from Lemma \ref{lemma: weak cohomologous property} and Lemma \ref{lemma: our VP and weak Bowen-Gibbsianity}.
In fact, 
by applying the first and second parts of Lemma \ref{lemma: our VP and weak Bowen-Gibbsianity} to $\gamma=\gamma^\phi$, one obtains from Lemma \ref{lemma: weak cohomologous property} that for any equilibrium state $\mu\in\es(\phi)$,
\begin{equation}
    \mu([\sigma_0^{n-1}])
    =
    \frac{e^{(S_{n}\phi_{\gamma^\phi})(\sigma)}}
    {e^{n P(\phi_{\gamma^\phi})}}e^{o(n)},\;\; \sigma\in\Omega,
\end{equation}
 and by (\ref{sup norm corollary from weak coboundary}), one has that $S_{n}(\phi -P(\phi))=S_n(\phi_{\gamma^\phi}-P(\phi_{\gamma^\phi}))+o(n)$.
Hence one immediately concludes the weak Bowen-Gibbs property of $\mu$ with respect to the potential $\phi$.

\end{proof}

\begin{proof}[Proof of Theorem \ref{Th B: Gibbs states are equilibrium for extensible phi}]
    It is easy to see that the second part of Lemma \ref{lemma: our VP and weak Bowen-Gibbsianity} implies Theorem B.
\end{proof}

\section*{Acknowledgements}

The authors are grateful to Aernout van Enter for his valuable suggestions.

\begin{bibdiv}
\begin{biblist}
\bib{BGMMT2020}{article}{
   author={Barbieri, Sebasti\'{a}n},
   author={G\'{o}mez, Ricardo},
   author={Marcus, Brian},
   author={Meyerovitch, Tom},
   author={Taati, Siamak},
   title={Gibbsian representations of continuous specifications: the
   theorems of Kozlov and Sullivan revisited},
   journal={Comm. Math. Phys.},
   volume={382},
   date={2021},
   number={2},
   pages={1111--1164},
   issn={0010-3616},
   review={\MR{4227169}},
   doi={10.1007/s00220-021-03979-2},
}
\bib{BFV2019}{article}{
   author={Berghout, Steven},
   author={Fern\'{a}ndez, Roberto},
   author={Verbitskiy, Evgeny},
   title={On the relation between Gibbs and $g$-measures},
   journal={Ergodic Theory Dynam. Systems},
   volume={39},
   date={2019},
   number={12},
   pages={3224--3249},
   issn={0143-3857},
   review={\MR{4027547}},
   doi={10.1017/etds.2018.13},
}

\bib{Bowen-book}{book}{
   author={Bowen, Rufus},
   title={Equilibrium states and the ergodic theory of Anosov
   diffeomorphisms},
   series={Lecture Notes in Mathematics},
   volume={470},
   edition={Second revised edition},
   note={With a preface by David Ruelle;
   Edited by Jean-Ren\'{e} Chazottes},
   publisher={Springer-Verlag, Berlin},
   date={2008},
   pages={viii+75},
   isbn={978-3-540-77605-5},
   review={\MR{2423393}},
}

\bib{Capocaccia1976}{article}{
   author={Capocaccia, D.},
   title={A definition of Gibbs state for a compact set with $Z^{\nu }$
   action},
   journal={Comm. Math. Phys.},
   volume={48},
   date={1976},
   number={1},
   pages={85--88},
   issn={0010-3616},
   review={\MR{415675}},
}
\bib{Dobrushin1968}{article}{
   author={Dobrushin, R. L.},
   title={Gibbsian random fields for lattice systems with pairwise
   interactions},
   language={Russian},
   journal={Funkcional. Anal. i Prilo\v{z}en.},
   volume={2},
   date={1968},
   number={4},
   pages={31--43},
   issn={0374-1990},
   review={\MR{250630}},
}
\bib{EFMV2024}{article}{
   author={van Enter, Aernout C. D.},
   author={Fern\'{a}ndez, Roberto},
   author={Makhmudov, Mirmukhsin},
   author={Verbitskiy, Evgeny},
   title={On an extension of a theorem by Ruelle to long-range potentials},
   date={2024},
}

\bib{EFS1993}{article}{
   author={van Enter, Aernout C. D.},
   author={Fern\'{a}ndez, Roberto},
   author={Sokal, Alan D.},
   title={Regularity properties and pathologies of position-space
   renormalization-group transformations: scope and limitations of Gibbsian
   theory},
   journal={J. Statist. Phys.},
   volume={72},
   date={1993},
   number={5-6},
   pages={879--1167},
   issn={0022-4715},
   review={\MR{1241537}},
   doi={10.1007/BF01048183},
}

\bib{Georgii-book}{book}{
   author={Georgii, Hans-Otto},
   title={Gibbs measures and phase transitions},
   series={De Gruyter Studies in Mathematics},
   volume={9},
   publisher={Walter de Gruyter \& Co., Berlin},
   date={1988},
   pages={xiv+525},
   isbn={0-89925-462-4},
   review={\MR{956646}},
   doi={10.1515/9783110850147},
}
\bib{H1987}{article}{
   author={Haydn, Nicolai T. A.},
   title={On Gibbs and equilibrium states},
   journal={Ergodic Theory Dynam. Systems},
   volume={7},
   date={1987},
   number={1},
   pages={119--132},
   issn={0143-3857},
   review={\MR{886374}},
   doi={10.1017/S0143385700003849},
}
\bib{H1994}{article}{
   author={Haydn, Nicolai T. A.},
   title={Classification of Gibbs' states on Smale spaces and
   one-dimensional lattice systems},
   journal={Nonlinearity},
   volume={7},
   date={1994},
   number={2},
   pages={345--366},
   issn={0951-7715},
   review={\MR{1267693}},
}

\bib{HR1992}{article}{
   author={Haydn, N. T. A.},
   author={Ruelle, D.},
   title={Equivalence of Gibbs and equilibrium states for homeomorphisms
   satisfying expansiveness and specification},
   journal={Comm. Math. Phys.},
   volume={148},
   date={1992},
   number={1},
   pages={155--167},
   issn={0010-3616},
   review={\MR{1178139}},
}

\bib{Israel-book}{book}{
   author={Israel, Robert B.},
   title={Convexity in the theory of lattice gases},
   series={Princeton Series in Physics},
   note={With an introduction by Arthur S. Wightman},
   publisher={Princeton University Press, Princeton, NJ},
   date={1979},
   pages={lxxxv+167},
   isbn={0-691-08209-\Omega},
   review={\MR{517873}},
}

\bib{JOP2023}{article}{
   author={Johansson, Anders},
   author={Öberg, Anders},
   author={Pollicott, Mark},
   title={Continuous eigenfunctions of the transfer operator for the Dyson model},
   journal={Arxiv 2304.04202},
   date={2023},
}
\bib{Keller-book}{book}{
   author={Keller, Gerhard},
   title={Equilibrium states in ergodic theory},
   series={London Mathematical Society Student Texts},
   volume={42},
   publisher={Cambridge University Press, Cambridge},
   date={1998},
   pages={x+178},
   isbn={0-521-59420-0},
   isbn={0-521-59534-7},
   review={\MR{1618769}},
   doi={10.1017/CBO9781107359987},
}
\bib{KLNR2004}{article}{
   author={K\"ulske, Christof},
   author={Le Ny, Arnaud},
   author={Redig, Frank},
   title={Relative entropy and variational properties of generalized Gibbsian measures},
   journal={The Annals of Probability},
   volume={32},
   date={2004},
   number={2},
   pages={1691--1726},
   doi={10.1214/009117904000000342},
}
\bib{LR1969}{article}{
   author={Lanford, O. E., III},
   author={Ruelle, D.},
   title={Observables at infinity and states with short range correlations
   in statistical mechanics},
   journal={Comm. Math. Phys.},
   volume={13},
   date={1969},
   pages={194--215},
   issn={0010-3616},
   review={\MR{256687}},
}
\bib{MRTMV2000}{article}{
   author={Maes, Christian},
   author={Redig, Frank},
   author={Takens, Floris},
   author={van Moffaert, Annelies},
   author={Verbitski, Evgeny},
   title={Intermittency and weak Gibbs states},
   journal={Nonlinearity},
   volume={13},
   date={2000},
   number={5},
   pages={1681--1698},
   issn={0951-7715},
   review={\MR{1781814}},
   doi={10.1088/0951-7715/13/5/314},
}

\bib{M2025}{article}{
   author={Makhmudov, Mirmukhsin},
   title={The Eigenfunctions of the Transfer Operator for the Dyson model in a field},
   date={2025},
}

\bib{PS2018}{article}{
   author={Pfister, C.-E.},
   author={Sullivan, W. G.},
   title={Weak Gibbs measures and large deviations},
   journal={Nonlinearity},
   volume={31},
   date={2018},
   number={1},
   pages={49--53},
   issn={0951-7715},
   review={\MR{3746632}},
   doi={10.1088/1361-6544/aa99a3},
}

\bib{PS2020}{article}{
   author={Pfister, C.-E.},
   author={Sullivan, W. G.},
   title={Asymptotic decoupling and weak Gibbs measures for finite alphabet
   shift spaces},
   journal={Nonlinearity},
   volume={33},
   date={2020},
   number={9},
   pages={4799--4817},
   issn={0951-7715},
   review={\MR{4135096}},
   doi={10.1088/1361-6544/ab8fb7},
}
\bib{R1968}{article}{
   author={Ruelle, D.},
   title={Statistical mechanics of a one-dimensional lattice gas},
   journal={Comm. Math. Phys.},
   volume={9},
   date={1968},
   pages={267--278},
   issn={0010-3616},
   review={\MR{234697}},
}

\bib{Ruelle-book}{book}{
   author={Ruelle, David},
   title={Thermodynamic formalism},
   series={Cambridge Mathematical Library},
   edition={2},
   note={The mathematical structures of equilibrium statistical mechanics},
   publisher={Cambridge University Press, Cambridge},
   date={2004},
   pages={xx+174},
   isbn={0-521-54649-4},
   review={\MR{2129258}},
   doi={10.1017/CBO9780511617546},
}
\bib{Sinai1972}{article}{
author ={ Yakov.G. Sinai},
title= {Gibbs measures in Ergodic Theory},
journal ={Russian Math. Surveys},
volume={27:4},
pages= {21--69},
date={1972}
}
\bib{Sullivan1973}{article}{
   author={Sullivan, Wayne G.},
   title={Potentials for almost Markovian random fields},
   journal={Comm. Math. Phys.},
   volume={33},
   date={1973},
   pages={61--74},
   issn={0010-3616},
   review={\MR{410987}},
}
\bib{Walters1978}{article}{
   author={Walters, Peter},
   title={Invariant measures and equilibrium states for some mappings which
   expand distances},
   journal={Trans. Amer. Math. Soc.},
   volume={236},
   date={1978},
   pages={121--153},
   issn={0002-9947},
   review={\MR{466493}},
   doi={10.2307/1997777},
}
\bib{Walters-book}{book}{
   author={Walters, Peter},
   title={An introduction to ergodic theory},
   series={Graduate Texts in Mathematics},
   volume={79},
   publisher={Springer-Verlag, New York-Berlin},
   date={1982},
   pages={ix+250},
   isbn={0-387-90599-5},
   review={\MR{648108}},
}

\bib{Walters2001}{article}{
   author={Walters, Peter},
   title={Convergence of the Ruelle operator for a function satisfying
   Bowen's condition},
   journal={Trans. Amer. Math. Soc.},
   volume={353},
   date={2001},
   number={1},
   pages={327--347},
}

\end{biblist}
\end{bibdiv}
\end{document}